\documentclass[print,wide,eng]{draft}
\hypersetup{hidelinks}

\title{A Unified Kalman-Type Classification of Controllability\\
for Linear Stochastic Systems}
\author{Qi L\"u\thanks{School of Mathematics, Sichuan University, Chengdu 610064, China. E-mail: \texttt{lu@scu.edu.cn}.}
\and
Yu Wang\thanks{School of Mathematics, Southwest Jiaotong University, Chengdu 611756, China. E-mail: \texttt{yuwangmath@163.com}.}}
\date{}

\begin{document}
\maketitle
\begin{abstract}
We provide a unified algebraic classification of approximate controllability,
approximate null controllability, exact null controllability, and exact
controllability for finite-dimensional linear stochastic control systems with
constant coefficients and multidimensional Brownian noise. The controls may
enter both the drift and the diffusion terms, and the joint diffusion-control
matrix may have arbitrary rank. A finite decreasing sequence of subspaces
defines a common geometric obstruction. Its vanishing characterizes the two
approximate properties and exact null controllability. Exact controllability to
arbitrary random targets additionally requires the joint diffusion-control
matrix to be surjective. The codimension of the obstruction is given by a rank
difference of explicit polynomial block matrices, yielding finite tests in the
original coefficients. When the joint diffusion-control matrix is surjective,
we obtain a finite Kalman-type full-rank condition for exact controllability.
If the obstruction vanishes, a linear state feedback with a deterministic
time-dependent gain drives every initial state to zero at any prescribed positive time with finite energy and
polynomial small-time cost. An equivalent orthogonal elimination algorithm
computes the rank differences with polynomial storage and cost, avoiding
exponentially large block matrices.
A two-noise example relates the input classification to terminal errors
and control energy in penalized stochastic linear-quadratic control problems.
\end{abstract}

\noindent\textbf{Keywords.} Stochastic systems; Controllability; Multiplicative noise; State feedback; Rank conditions; Numerical algorithms

\textbf{MSC 2020.} 93B05, 93B40, 93E03.

\section{Introduction}\label{sec:intro}

Linear stochastic systems with state- and control-dependent diffusion
provide a tractable setting for studying feedback design in the presence
of multiplicative uncertainty. Such models arise, for example, when
actuation signals are transmitted through uncertain input channels
\cite{NandanooriDiwadkarVaidya2018}. For multi-input systems, the organization
and allocation of these channels also enter the control design, as
illustrated by scheduling/control co-design over shared stochastic
channels \cite{ChenYaoQiu2019}. This model also connects mean-square
stabilization with generalized Riccati equations \cite{ZhangChen2004}.
These connections motivate a structural question for finite-horizon
control: which input configurations permit a prescribed terminal
requirement to be met by an adapted control with finite energy?
In particular, reaching the origin at a prescribed time and reaching
an arbitrary random terminal state impose different requirements on
how the control acts on the drift and the diffusion.

\subsection{Formulation of the controllability problems}

Let $W=(W^1,\dots,W^m)$ be an $m$-dimensional Brownian motion on a
complete probability space $(\Omega,\mathcal F,\mathbb P)$, and let
$\{\mathcal F_t\}_{t\geq0}$ be the usual augmentation of its natural
filtration. Write $\mathbb F$ for the progressive $\sigma$-algebra and
$\mathbb E$ for expectation.

For a Euclidean space $H$, let $L^2_{\mathcal F_t}(\Omega;H)$ denote
the square-integrable $\mathcal F_t$-measurable random variables.
For $T>0$, the space $L^2_{\mathbb F}(0,T;H)$ consists of progressively
measurable processes with $\mathbb E\int_0^T|v(t)|_H^2\,dt<\infty$.
We use $L^2_{\mathbb F}\bigl(\Omega;C([0,T];H)\bigr)$ for continuous adapted
processes with $\mathbb E\sup\limits_{0\leq t\leq T}|v(t)|_H^2<\infty$.

For $t\in[0,T]$, consider the linear control system
\begin{align}\label{eq:system}
\begin{cases}
\displaystyle dy(t)=[Ay(t)+Bu(t)]\,dt+\sum_{j=1}^m[C_jy(t)+D_ju(t)]\,dW^j(t),\\[4pt]
y(0)=y_0,
\end{cases}
\end{align}
where $A,C_j\in\mathbb R^{n\times n}$,
$B,D_j\in\mathbb R^{n\times r}$, $n,r,m\in \mathbb N$, $j=1,\dots,m$. The control
$u\in\mathcal U_T:=L^2_{\mathbb F}(0,T;\mathbb R^r)$.

By \cite[Theorem~3.2]{Lue2021a}, for every $y_0\in\mathbb R^n$ and
$u\in\mathcal U_T$, the control system \cref{eq:system} has a unique solution
$y(\cdot;y_0,u)\in L^2_{\mathbb F}\bigl(\Omega;C([0,T];\mathbb R^n)\bigr)$
satisfying
\begin{align}\label{eq:state-estimate}
\mathbb E\sup_{0\leq t\leq T}|y(t;y_0,u)|^2
\leq c(T)\Bigl(|y_0|^2+\mathbb E\int_0^T|u(t)|^2\,dt\Bigr).
\end{align}

Let us recall the following definition.
\begin{definition}\label{def:control}
For $T>0$, system \cref{eq:system} is said to be:
\begin{itemize}
\item (EC)  \emph{exactly controllable} if, for every
$y_0\in\mathbb R^n$ and
$\xi\in L^2_{\mathcal F_T}(\Omega;\mathbb R^n)$, there is
$u\in\mathcal U_T$ such that $y(T;y_0,u)=\xi$, $\mathbb P$-a.s.;
\item (AC)  \emph{approximately controllable} if, for every
$y_0\in\mathbb R^n$,
$\xi\in L^2_{\mathcal F_T}(\Omega;\mathbb R^n)$, and
$\varepsilon>0$, there is $u\in\mathcal U_T$ such that
\begin{align*}
\mathbb E|y(T;y_0,u)-\xi|^2<\varepsilon;
\end{align*}
\item (ENC) \emph{exactly null controllable} if, for every
$y_0\in\mathbb R^n$, there is $u\in\mathcal U_T$ such that
$y(T;y_0,u)=0$, $\mathbb P$-a.s.;
\item (ANC) \emph{approximately null controllable} if, for every
$y_0\in\mathbb R^n$ and $\varepsilon>0$, there is
$u\in\mathcal U_T$ such that
\begin{align*}
\mathbb E|y(T;y_0,u)|^2<\varepsilon.
\end{align*}
\end{itemize}
\end{definition}

Write $\mathcal C=[C_1^\top\ \cdots\ C_m^\top]^\top
\in\mathbb R^{mn\times n}$ and define the joint diffusion-control matrix
\begin{align}\label{10.1-eq1}
\mathcal D=[D_1^\top\ \cdots\ D_m^\top]^\top
\in\mathbb R^{mn\times r}.
\end{align}

For deterministic linear systems with constant coefficients, the Kalman
rank condition characterizes controllability, and exact, null and
approximate controllability coincide when the controls and targets
are deterministic.

The stochastic counterpart of this classical picture is less complete.
Even for linear stochastic differential equations with constant coefficients
and controls entering both the drift and diffusion, the deterministic
equivalences and rank condition do not carry over directly. For the class
of systems in \cref{eq:system}, two basic questions concern the relationships
among the controllability notions and their algebraic characterization.

The first concerns the relationships among the four properties in
\cref{def:control}. For scalar Brownian noise, Goreac
\cite[Theorem~3 and Remark~4]{Goreac} characterized approximate
controllability and approximate null controllability by a common
conditional-invariance obstruction. In contrast, approximate controllability
does not imply exact controllability to arbitrary random targets, as
illustrated by \cref{ex:singular-diffusion}. The question addressed here is whether approximate null
controllability also guarantees exact null control with finite energy
at every prescribed positive time for \cref{eq:system}.
This gap is not merely technical. The terminal control map takes values
in an infinite-dimensional space of random variables. Approximate null
controllability places the negative of the uncontrolled terminal state in
the closure of the range of the terminal control map, whereas exact null
controllability requires membership in the range itself. A sequence of
approximately null controls need not have uniformly bounded energy, so the
passage to exact null controllability is not automatic even in finite
dimensions. We also ask what additional condition is needed to attain arbitrary
random targets exactly.

The second question concerns a Kalman-type rank characterization in the
original coefficients. For scalar noise and a diffusion-control matrix
of full row rank, Peng
\cite[pp.~281--282 and Theorem~3.2]{Peng1994} obtained a necessary and
sufficient rank condition involving products of coefficient matrices
after an equivalent change of controls. L\"u and Zhang
\cite[Theorem~6.10]{Lue2021a} proved that products of length at most
$n-1$ suffice, and raised the problem of a rank characterization
directly for the original system in \cite[Remark~6.11]{Lue2021a}.
We seek a unified finite rank characterization in the original
coefficients for multidimensional noise and arbitrary rank of
$\mathcal D$, keeping the same adapted $L^2$ control class.

We resolve these two questions by identifying a common geometric
obstruction and expressing its codimension as the rank difference of
explicit polynomial block matrices in $A,B,(C_j,D_j)_{j=1}^m$.

From a design perspective, a criterion in the original coefficients
can be applied to candidate input configurations before constructing
a controller. Our results distinguish the feasibility of returning
the state exactly to the origin at a prescribed time from that of
attaining arbitrary square-integrable random terminal states. They
also reveal a concrete restriction on the input structure: every
controllability property considered here requires
$B\ker\mathcal D\neq\{0\}$, so some input combination must affect the
drift without adding diffusion. Exact controllability further requires
$\operatorname{rank}\mathcal D=mn$, and consequently $r\geq mn+1$;
see \cref{rem:input-structure}. The complete criteria in \cref{cor:rank-control} can thus test
feasibility before a minimum-energy steering problem is solved.
The numerical example in \cref{sec:numerical-example} illustrates
these distinctions through terminal-error and control-energy comparisons.
\subsection{Main results of this paper}\label{sec:geometry}

We first state the geometric classification and the feedback result, then
express the classification through finite rank conditions in the original
coefficient matrices.

We use $\langle\cdot,\cdot\rangle$ and $|\cdot|$ for Euclidean inner
products and norms, and $\lVert\cdot\rVert$ and $\lVert\cdot\rVert_F$ for
matrix operator and Frobenius norms. The standard coordinate vectors are
denoted by $e_1,\dots,e_n$. The transpose is denoted by $\top$, and
$\otimes$ denotes the Kronecker product. For a
subspace $V$, let $V^\perp$ be its Euclidean orthogonal complement and
$\Pi_V$ its orthogonal projector. Matrices with zero rows or columns are
allowed, with the usual rank, range and kernel conventions. In what follows, $c$ denotes a positive constant depending only on
$A,B,(C_j,D_j)_{j=1}^m$, and $c(T)$ may additionally depend on $T$;
these generic constants may change between formulas. Constants whose identity
is used subsequently are named separately.

To describe the obstruction to controllability, for $\eta\in L^2_{\mathcal F_T}(\Omega;\mathbb R^n)$,
consider the adjoint BSDE
\begin{align}\label{eq:adjoint}
\begin{cases}
\displaystyle dz(t)=-\Bigl(A^{\top}z(t)+\sum_{j=1}^mC_j^{\top}Z_j(t)\Bigr)\,dt
+\sum_{j=1}^m Z_j(t)\,dW^j(t),\\[4pt]
z(T)=\eta.
\end{cases}
\end{align}
Here $Z=(Z_1,\dots,Z_m)$. By \cite[Theorem~4.2]{Lue2021a},
\cref{eq:adjoint} has a unique solution with
$z\in L^2_{\mathbb F}\bigl(\Omega;C([0,T];\mathbb R^n)\bigr)$ and
$Z\in L^2_{\mathbb F}\bigl(0,T;(\mathbb R^n)^m\bigr)$.
Applying It\^o's formula to $\langle y,z\rangle$ and taking expectations
gives the duality identity
\begin{align}
\mathbb E\langle y(T),\eta\rangle-\langle y_0,z(0)\rangle
=\mathbb E\int_0^T
\Bigl\langle u,B^{\top}z+\sum_{j=1}^mD_j^{\top}Z_j\Bigr\rangle\,dt.
\label{eq:duality}
\end{align}
Thus $B^{\top}z+\sum\limits_{j=1}^mD_j^{\top}Z_j$ is the adjoint observation
associated with the control. An adjoint solution with zero observation is
undetected by every admissible control in \cref{eq:duality}.

For such an adjoint motion to remain in a subspace $V\subset\mathbb R^n$,
each martingale coefficient $w_j$ and the drift must belong to $V$.
We therefore define $\mathcal R(V)$ as the set of $v\in V$ for which
some $w_1,\dots,w_m\in V$ satisfy
\begin{align}\label{eq:map}
B^{\top}v+\sum_{j=1}^mD_j^{\top}w_j=0,\qquad
A^{\top}v+\sum_{j=1}^mC_j^{\top}w_j\in V.
\end{align}
Starting from $\mathcal V_0=\mathbb R^n$, set
\begin{align}\label{eq:flag}
	\mathcal V_{k+1}=\mathcal R(\mathcal V_k)\quad(k\geq0).
\end{align}
We denote $\mathcal V_n$ by $\mathcal V_*$.

The next lemma gives an intrinsic characterization of the limiting space.
\begin{lemma}\label{lem:flag}
	The sequence $(\mathcal V_k)_{k\geq0}$ decreases and is stationary from
	step $n$ onward. Its limit $\mathcal V_*$ is the largest subspace $V$
	for which some $\Gamma_1,\dots,\Gamma_m\in\mathbb R^{n\times n}$ satisfy
	\begin{align}\label{eq:invariance}
		\Gamma_jV\subset V\quad(j=1,\dots,m),\qquad
		\Bigl(A^{\top}+\sum_{j=1}^mC_j^{\top}\Gamma_j\Bigr)V\subset V,\qquad
		\Bigl(B^{\top}+\sum_{j=1}^mD_j^{\top}\Gamma_j\Bigr)V=\{0\}.
	\end{align}
\end{lemma}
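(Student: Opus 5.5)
We will establish the statement in three steps: the monotonicity of $\mathcal R$, stabilization of the sequence by dimension counting, and the identification of the limit via a linear selection of the witnesses $w_j$.

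\emph{Step 1: monotonicity and decrease.} First, $\mathcal R(V)$ is a linear subspace of $V$. It is the image under the projection $(v,w_1,\dots,w_m)\mapsto v$ of the subspace of $V^{m+1}$ cut out by the linear conditions \cref{eq:map}. Second, $\mathcal R$ is monotone: if $V\subset V'$, then witnesses $w_j\in V$ for $v\in\mathcal R(V)$ are also witnesses in $V'$, and the drift condition is preserved, so $\mathcal R(V)\subset\mathcal R(V')$. Since $\mathcal V_1\subset\mathcal V_0$, induction with monotonicity gives $\mathcal V_{k+1}=\mathcal R(\mathcal V_k)\subset\mathcal R(\mathcal V_{k-1})=\mathcal V_k$. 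Moreover, once $\mathcal V_{k+1}=\mathcal V_k$, the sequence is constant from $k$ on. A strictly decreasing chain of subspaces of $\mathbb R^n$ has at most $n$ strict inclusions. Hence $\mathcal V_{k+1}=\mathcal V_k$ for some $k\le n$, and so $\mathcal V_j=\mathcal V_n$ for all $j\ge n$. The limit $\mathcal V_*$ therefore satisfies $\mathcal R(\mathcal V_*)=\mathcal V_*$.

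\emph{Step 2: fixed points of $\mathcal R$ are exactly the subspaces admitting $\Gamma_j$.} Suppose $V$ admits $\Gamma_1,\dots,\Gamma_m$ satisfying \cref{eq:invariance}. For $v\in V$, set $w_j=\Gamma_jv\in V$. Then $B^\top v+\sum_j D_j^\top w_j=0$ and $A^\top v+\sum_j C_j^\top w_j\in V$, so $V\subset\mathcal R(V)$. Since also $\mathcal R(V)\subset V$, we get $V=\mathcal R(V)$.

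Conversely, suppose $V=\mathcal R(V)$. Take a basis $v^1,\dots,v^p$ of $V$. For each $v^i$, choose witnesses $w^i_j\in V$. Define $\Gamma_j$ on $V$ by $\Gamma_jv^i=w^i_j$, and set $\Gamma_j=0$ on $V^\perp$. By linearity, for every $v\in V$ we have $\Gamma_jv\in V$, $(B^\top+\sum_j D_j^\top\Gamma_j)v=0$, and $(A^\top+\sum_j C_j^\top\Gamma_j)v\in V$, since each condition is linear in the pair $(v,(w_j))$. This linear selection is the only point needing care: the set of witnesses for a given $v$ is an affine space, and choosing them on a basis and extending linearly is what makes the $\Gamma_j$ well defined.

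\emph{Step 3: maximality.} By Step 2, $\mathcal V_*$ admits such $\Gamma_j$. Now let $V$ be any subspace admitting $\Gamma_j$ as in \cref{eq:invariance}, so that $V=\mathcal R(V)$ by Step 2. We show by induction that $V\subset\mathcal V_k$. The base case is $V\subset\mathcal V_0=\mathbb R^n$. If $V\subset\mathcal V_k$, then by monotonicity $V=\mathcal R(V)\subset\mathcal R(\mathcal V_k)=\mathcal V_{k+1}$. Hence $V\subset\mathcal V_*$, so $\mathcal V_*$ is the largest such subspace. The sum of two admissible subspaces need not be admissible a priori, but this is irrelevant: the induction directly exhibits $\mathcal V_*$ as the maximum. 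I expect no genuine obstacle; the only subtle step is the linear selection of the witnesses in Step 2.
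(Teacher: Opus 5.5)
Your proof is correct and follows essentially the same route as the paper. The paper gets the decrease from $\mathcal R(V)\subset V$ and stabilization by dimension counting. It builds the $\Gamma_j$ by choosing witnesses on a basis of the fixed point and extending linearly, and it proves maximality by induction on $k$ with $w_j=\Gamma_jv$; you merely package that last step as monotonicity of $\mathcal R$ plus $V=\mathcal R(V)$.

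Two minor remarks. Monotonicity is not needed for the decrease, since $\mathcal V_{k+1}=\mathcal R(\mathcal V_k)\subset\mathcal V_k$ directly. Your aside about sums is overcautious: adding witnesses shows $V_1+V_2\subset\mathcal R(V_1+V_2)$ for admissible $V_1,V_2$, so sums of admissible subspaces are admissible.
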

\begin{proof}
For every subspace $V$, the linear conditions in \cref{eq:map} make
$\mathcal R(V)$ a subspace of $V$. Thus
$\mathcal V_{k+1}\subset\mathcal V_k$ for every $k\geq0$.
If equality holds at some step, then
the sequence is stationary from that step onward. Otherwise, the dimension
strictly decreases at each step; since the ambient space has dimension $n$,
stationarity must occur no later than step $n$.

Next we identify the limit $\mathcal V_*$. At this fixed point, for each
basis vector $v\in\mathcal V_*$, choose $w_1,\dots,w_m$ satisfying
\cref{eq:map}. Since the defining condition is linear, extending these
assignments linearly on $\mathcal V_*$ and then to $\mathbb R^n$ defines
matrices $\Gamma_1,\dots,\Gamma_m$ satisfying \cref{eq:invariance}
with $V=\mathcal V_*$.

Conversely, suppose that a subspace $V$ satisfies \cref{eq:invariance}
for some $\Gamma_1,\dots,\Gamma_m$. We show by induction on $k$ that
$V\subset\mathcal V_k$. For $k=0$, this holds since $\mathcal V_0$ is
the ambient space. Assume $V\subset\mathcal V_k$. For $v\in V$, set
$w_j=\Gamma_jv$. Then the invariance conditions ensure that
$v\in\mathcal V_{k+1}$, because the defining condition for
$\mathcal V_{k+1}$ is satisfied with these $w_j$. Hence
$V\subset\mathcal V_{k+1}$. Therefore $V\subset\mathcal V_k$ for all
$k$, and thus $V\subset\mathcal V_*$. This proves that $\mathcal V_*$
is the largest subspace with the stated property.
\end{proof}

Each step removes directions that fail the zero-observation and drift
conditions in \cref{eq:map}. At the limit, \cref{eq:invariance}
provides linear choices $w_j=\Gamma_jv$ in the obstruction space.

The following theorem classifies the four controllability properties through
the obstruction space and the joint diffusion rank.
\begin{theorem}\label{thm:geometry}
	For system \cref{eq:system}, the following statements hold.
	\begin{enumerate}[label=(\roman*)]
		\item For every $T>0$, approximate controllability, approximate null
		controllability, and exact null controllability are equivalent, and each
		holds if and only if $\mathcal V_*=\{0\}$.

		\item For every $T>0$, exact controllability to arbitrary targets in
		$ L^2_{\mathcal F_T}(\Omega;\mathbb R^n)$ holds if and only if
		\begin{align}\label{eq:exact-condition}
			\mathcal V_*=\{0\}\qquad\text{and}\qquad
			\operatorname{rank}\mathcal D=mn.
		\end{align}
	\end{enumerate}
\end{theorem}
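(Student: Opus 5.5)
Our route is duality via \cref{eq:duality}, together with an explicit feedback construction for the hardest implication. We prove the cycle ENC $\Rightarrow$ ANC $\Rightarrow$ [$\mathcal V_*=\{0\}$] $\Rightarrow$ ENC, and then show AC $\Leftrightarrow$ [$\mathcal V_*=\{0\}$] separately. ENC $\Rightarrow$ ANC and AC $\Rightarrow$ ANC are trivial.

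\emph{Necessity (ANC $\Rightarrow \mathcal V_*=\{0\}$).} Suppose $\mathcal V_*\neq\{0\}$ and take the matrices $\Gamma_j$ from \cref{lem:flag}. Choose $v_0\in\mathcal V_*\setminus\{0\}$ and solve the linear random ODE/SDE for $z$ (this is a forward SDE in $\mathcal V_*$)
\begin{align*}
dz=-\Bigl(A^\top+\sum_jC_j^\top\Gamma_j\Bigr)z\,dt+\sum_j\Gamma_jz\,dW^j,\qquad z(0)=v_0 .
\end{align*}
By the invariance in \cref{eq:invariance}, $z$ stays in $\mathcal V_*$. Setting $Z_j=\Gamma_jz$, the pair $(z,Z)$ solves \cref{eq:adjoint} with terminal value $\eta=z(T)$, and its observation $B^\top z+\sum_jD_j^\top Z_j$ vanishes. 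Then \cref{eq:duality} gives $\mathbb E\langle y(T),z(T)\rangle=\langle y_0,v_0\rangle$ for every control $u$. Taking $y_0=v_0$ yields $|v_0|^2\le(\mathbb E|y(T)|^2)^{1/2}\|z(T)\|_{L^2}$, which contradicts ANC. The same identity, with $\xi$ chosen so that $\mathbb E\langle\xi,z(T)\rangle\neq\langle y_0,v_0\rangle$, contradicts AC.

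\emph{Sufficiency ($\mathcal V_*=\{0\}\Rightarrow$ ENC, and hence AC).} This is the main obstacle. We plan to establish an observability inequality $|z(0)|^2\le c(T)\,\mathbb E\int_0^T|B^\top z+\sum_jD_j^\top Z_j|^2dt$ for all solutions of \cref{eq:adjoint}, since by the standard HUM/range argument this is equivalent to ENC with a cost bound. Alternatively, in line with the abstract, we would construct the control directly by a feedback $u=K(t)y$ built recursively along the flag. At stage $k$, we use the defining relations \cref{eq:map} to find, for directions in $\mathcal V_k^\perp\cap\mathcal V_{k-1}$, a control combination that drives them into $\mathcal V_k^\perp$ through the drift without creating diffusion outside the controlled directions. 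This amounts to a change of coordinates that puts the system into a staircase (Brunovsky-like) form with $n$ levels. On each level we then apply a time-dependent gain that blows up like $(T-t)^{-p}$, as in finite-time stabilization of chains of integrators, and an It\^o/Lyapunov estimate of $\mathbb E|y(t)|^2$ with weights $(T-t)^{-q_k}$ gives $y(T)=0$ and $\mathbb E\int|u|^2<\infty$. The delicate point is that the diffusion terms $C_jy+D_jKy$ must remain compatible with the decay rates. Here the choice $w_j=\Gamma_jv$ enters: we would pick $K$ so that $C_j+D_jK$ maps the relevant levels consistently, and we would verify the energy bound level by level.

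For AC, given ENC, we still need density of the reachable set from $0$. Since the terminal map is linear, AC is equivalent to the adjoint unique continuation property: zero observation forces $\eta=0$. If the observation vanishes, the adjoint process lives in a subspace on which \cref{eq:map} holds pathwise. A flag induction shows that $z(t)\in\mathcal V_k$ and $Z_j(t)\in\mathcal V_k$ a.e. for all $k$, hence $z\equiv0$ and $\eta=0$.

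\emph{Part (ii).} For necessity of $\operatorname{rank}\mathcal D=mn$, suppose $\mathcal D^\top\zeta=0$ with $\zeta=(\zeta_j)\neq0$. We would take a target whose martingale representation has an integrand component along $\zeta$ that is not in $L^2$-compatible form. Concretely, the integrand of $y$ satisfies $\sum_j(C_jy+D_ju)$ paired with $\zeta$ equal to $\sum_j\langle\zeta_j,C_jy\rangle$, and this is bounded by the state. Choosing $\xi=\int_0^T f\,dW$ with a rough $f$ whose $\zeta$-component cannot equal $\sum_j\langle\zeta_j,C_jy\rangle$ for an $L^2$-continuous $y$ (for instance via a closed-range argument) gives the contradiction. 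Equivalently, the dual estimate $\|\eta\|^2\le c\,\mathbb E\int|B^\top z+\sum_j D_j^\top Z_j|^2$ fails for $\eta$ with large $Z$ concentrated along $\zeta$. For sufficiency, surjectivity of $\mathcal D$ lets us write $u=\mathcal D^{+}(Z-\mathcal Cy)+u_1$ with $u_1\in\ker\mathcal D$. This turns the problem into solving a BSDE with terminal value $\xi$, combined with steering the initial value $y(0)$ to $y_0$. The latter is an ENC-type problem for the reduced system driven by $\ker\mathcal D$ controls, and it holds since $\mathcal V_*=\{0\}$.
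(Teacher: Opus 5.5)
Your necessity argument (the invariant adjoint trajectory built from the $\Gamma_j$), the flag induction for AC, and the BSDE-plus-superposition argument for sufficiency in part~(ii) match the paper. The genuine gap is the implication $\mathcal V_*=\{0\}\Rightarrow$ ENC. This is the core of part~(i), and your part~(ii) sufficiency also depends on it.

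Your first route only restates ENC. The inequality $|z(0)|^2\le c(T)\,\mathbb E\int_0^T|B^\top z+\sum_jD_j^\top Z_j|^2\,dt$ is equivalent to ENC. What you can actually derive from $\mathcal V_*=\{0\}$ is only the qualitative statement that zero observation forces $\eta=0$. Here $\eta$ ranges over the infinite-dimensional space $L^2_{\mathcal F_T}(\Omega;\mathbb R^n)$. The terminal map $u\mapsto y(T;0,u)$ can have dense but non-closed range; this happens whenever $\mathcal V_*=\{0\}$ and $\operatorname{rank}\mathcal D<mn$. So no compactness argument upgrades injectivity of the adjoint map to the estimate, and approximate null controls need not have bounded energy.

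Your second route, the staircase feedback, is a plan rather than a proof. The flag is defined dually, with existentially chosen $w_j$, and the intermediate $\mathcal V_k$ are not invariant under any closed-loop dynamics. So no primal staircase form has been shown to exist. The multiplicative noise $C_jy$ of the uncontrolled components and the diffusion $D_jK(t)y$ injected by gains of size $(T-t)^{-p}$ couple all levels. This is exactly the ``compatibility'' issue you leave unresolved. \Cref{rem:nonrobustness} shows that arbitrarily small $C_j$ can destroy ENC, so any level-by-level estimate must exploit the exact algebraic structure, which your sketch never does.

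The paper avoids normal forms entirely. From ANC it builds, for each $\rho>0$, a linear family of approximate null controls on $[0,1]$ with contraction factor $e^{-\rho}/2$. It restarts this family on successive unit intervals, using a Wiener-functional representation and independence of increments, to obtain finite $e^{\rho t}$-weighted infinite-horizon cost. A Riccati argument with drift $A+\rho I_n/2$ then gives a constant gain and a Lyapunov matrix with $\mathcal M_\rho(P,U)<0$, that is, mean-square decay at any prescribed rate. A Tarski--Seidenberg argument bounds $\kappa(P)$ and $\lVert UP^{-1}\rVert^2$ polynomially in $\rho$. Switching on $t_j=T(1-2^{-j})$ with rates $\lambda_j=LT^{-2}4^j$ makes $\lambda_j(t_{j+1}-t_j)\to\infty$, so exponential decay beats the polynomial growth. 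This yields $y(T)=0$ with summable energy. Some quantitative mechanism of this kind is what your proposal is missing.

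A secondary gap concerns the necessity of $\operatorname{rank}\mathcal D=mn$. The ``rough $f$'' remark is not an argument. In the dual version, taking $Z$ large along $\zeta\in\ker\mathcal D^\top$ achieves nothing, by homogeneity. The key is to concentrate in time: take $Z_j^\varepsilon=\varepsilon^{-1/2}\zeta_j\mathbf 1_{[T-\varepsilon,T]}$ with $z^\varepsilon(0)=0$. Then $\mathbb E|\eta_\varepsilon|^2\to|\zeta|^2$, while $\mathbb E|z^\varepsilon(t)|^2$ stays bounded and the observation $B^\top z^\varepsilon$ lives on an interval of length $\varepsilon$.
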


In particular, each of the four controllability properties is independent
of the positive time horizon.

The next theorem realizes exact null controllability by a deterministic
linear feedback at each prescribed terminal time and gives a polynomial
small-time energy bound.
\begin{theorem}
	\label{thm:null-feedback}
	Assume that $\mathcal V_*=\{0\}$. Then for every $T>0$ there exists a
	deterministic, piecewise-constant matrix function
	$K_T:[0,T)\to\mathbb R^{r\times n}$, locally bounded on $[0,T)$, such
	that, for every $y_0\in\mathbb R^n$, the feedback control
	$u(t)=K_T(t)y(t)$ belongs to $\mathcal U_T$ and its state satisfies
	$y(T;y_0,u)=0$, $\mathbb P$-a.s. Moreover, there exist $c_0>0$ and an
	integer $N\geq1$, depending only on $A,B,(C_j,D_j)_{j=1}^m$, such that
	\begin{align}\label{eq:cost}
		\mathbb E\int_0^T|u(t)|^2\,dt\leq c_0T^{-N}|y_0|^2,
		\qquad 0<T\leq1.
	\end{align}
\end{theorem}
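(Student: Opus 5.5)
We build the feedback from the flag $(\mathcal V_k)$ by a finite-stage construction on a geometric time partition, killing one layer of the flag per stage. Let $p\le n$ be the first index with $\mathcal V_p=\{0\}$. Split $[0,T)$ into $p$ consecutive subintervals $I_k=[t_{k-1},t_k)$, $k=1,\dots,p$, of length $T/p$. On stage $k$ we want a constant gain $K_k$ such that the closed-loop state $y$ starts at $t_{k-1}$ in some subspace and ends at $t_k$ in a strictly smaller one. The flag is defined on the adjoint side, so we dualize. Put $U_k:=\mathcal V_{p-k}^\perp$, so $U_0=\{0\}$ and $U_p=\mathbb R^n$. The aim is to show inductively that on the last stage the state can be driven from $U_1$ to $0$, and more generally from $U_{k}$ into $U_{k-1}$ on a suitable stage.

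\textbf{Step 1 (algebraic feedback normal form).} Using the definition of $\mathcal R$, construct $K$ and a splitting so that the closed-loop matrices $A+BK$ and $C_j+D_jK$ have a block-triangular structure relative to the chain $U_0\subset U_1\subset\cdots\subset U_p$. The dual statement of $v\in\mathcal V_{k+1}$ is the existence of $w_j\in\mathcal V_k$ with $B^\top v+\sum D_j^\top w_j=0$ and $A^\top v+\sum C_j^\top w_j\in\mathcal V_k$. By orthogonality/annihilator arguments, for $v\in\mathcal V_k\setminus\mathcal V_{k+1}$ the observation map restricted to $\mathcal V_k$ is nondegenerate modulo $\mathcal V_{k+1}$. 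Dually, on the quotient $U_{p-k}/U_{p-k-1}$ the pair ``drift control plus diffusion control'' acts surjectively enough to allow a deadbeat-type correction.

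\textbf{Step 2 (stage-wise exact steering by feedback).} On each stage, the quotient component $\Pi y$ (projection onto $U_{k}\ominus U_{k-1}$) satisfies an SDE whose diffusion coefficient can be cancelled by the feedback in the $\ker$-directions. The cancellation uses $D_jK$ chosen to annihilate $C_j y$ components where possible. This leaves a controlled deterministic-type component. For that component use a time-dependent gain of the form $-(t_k-t)^{-1}$ times a pseudo-inverse, i.e. the classical Brunovsky/minimum-energy feedback $K(t)=-B_0^\top\Phi^\top G(t,t_k)^{-1}\Phi$. This gives exact arrival at the next subspace at $t_k$. We would make it piecewise constant by a further dyadic subdivision $t_k-2^{-\ell}(T/p)$, with gains scaling like $2^{\ell}$. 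The finiteness of energy then comes from a geometric series, since the component decays by a fixed factor on each dyadic piece while the gain doubles. The precise statement to prove is a contraction estimate $\mathbb E|\Pi y(s_{\ell+1})|^2\le \theta\,\mathbb E|\Pi y(s_\ell)|^2$ with $\theta<1/4$ chosen uniformly. This is the step I expect to be the main obstacle: the multiplicative noise $C_jy$ feeds the killed component back from higher layers, and one needs the triangular structure of Step 1 to ensure the feedback only couples downward.

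\textbf{Step 3 (cost).} Because everything is scale-covariant, the energy on a stage of length $\tau$ with gain of size $\tau^{-a}$ (from inverting a Gramian of Kalman blocks of depth at most $n$) is bounded by $c\,\tau^{-2a+1}$ times the incoming second moment. Composing $p$ stages, and using \cref{eq:state-estimate} to bound the incoming moments on each stage, yields $c_0T^{-N}|y_0|^2$ with $N$ depending only on $n$ and $p$. Adaptedness and membership $u\in\mathcal U_T$ follow since the gains are deterministic and locally bounded on $[0,T)$. Finally, $y(T)=0$ a.s. follows from the $L^2$ limit of the contracted components.
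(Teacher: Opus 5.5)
Your proposal has a genuine gap. Everything rests on the normal form of Step~1 and the noise cancellation of Step~2, and neither holds in general. (Also, $U_k:=\mathcal V_{p-k}^\perp$ gives $U_0=\mathbb R^n$ and $U_p=\{0\}$, the reverse of what you state.)

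The flag does not encode closed-loop invariance. For $v\in\mathcal V_{k+1}$, the recursion only supplies auxiliary vectors $w_j$ in the \emph{larger} space $\mathcal V_k$. By \cref{lem:flag}, any nonzero subspace admitting linear choices $w_j=\Gamma_jv$ with \cref{eq:invariance} would lie in $\mathcal V_*=\{0\}$.

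The layering fails even without noise. Take $A=\bigl(\begin{smallmatrix}0&1\\0&0\end{smallmatrix}\bigr)$, $B=e_2$, $C_j=D_j=0$. Then $\mathcal V_1^\perp=\operatorname{span}\{e_2\}$, and $(A+BK)e_2=e_1+(Ke_2)e_2$ for every $K$. The last stage, from $(0,a)$ to $0$, is the full two-dimensional steering problem, so killing one layer per stage reduces nothing.

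With noise, the diffusion generally cannot be cancelled. For $dy=u\,dt+y\,dW$ ($n=m=r=1$, $A=0$, $B=C_1=1$, $D_1=0$) one has $\mathcal V_1=\{0\}$, yet $(C_1+D_1K)y=y$ for every $K$. So there is no deterministic-type component on which a Gramian feedback can act.

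The scale covariance in Step~3 is also false. Rescaling time by $\tau$ turns $(A,B,C_j,D_j)$ into $(\tau A,\tau B,\sqrt\tau C_j,\sqrt\tau D_j)$, so the deterministic dilation argument behind gains of size $\tau^{-a}$ does not transfer. In any case, replacing a continuous Gramian gain by piecewise-constant ones forfeits exact arrival.

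The missing idea is exactly the uniform contraction you flag as the main obstacle. The paper obtains it analytically rather than from the flag:
\begin{enumerate}
\item Since $\mathcal V_*=\{0\}$ gives approximate null controllability, the paper restarts approximate null controls on unit intervals and passes to the infinite-horizon Riccati equation.
\item This yields, for every $\rho>0$, a constant gain $K=UP^{-1}$ with $\mathcal M_\rho(P,U)<0$, hence $\mathbb E|y(t+s)|^2\le\kappa(P)e^{-\rho s}\mathbb E|y(t)|^2$ for random initial states (\cref{lem:arbitrary-rate}).
\item A semialgebraic minimization over the normalized feasible set then bounds $\kappa(P)$ and $\lVert K\rVert^2$ by $M\rho^p$ (\cref{lem:growth}).
\item Switching on $t_j=T(1-2^{-j})$ with rates $\lambda_j=LT^{-2}4^j$ makes $\lambda_j\ell_j\to\infty$ dominate these polynomial factors. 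This gives both $y(T)=0$ and the $T^{-(4p-2)}$ energy bound.
\end{enumerate}
Your dyadic bookkeeping matches this last step. But without an arbitrary-rate stabilizing feedback with polynomially controlled gain and condition number, neither the factor $\theta<1/4$ nor the summability of the energy is established.
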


The gain $K_T$ may become unbounded as $t\uparrow T$, with switching
times accumulating at $T$. The energy bound concerns the control $u=K_Ty$.

\begin{table}[tb]
	\centering
	\caption{The only possible joint regimes under the standing assumptions.}
	\label{tab:classification-regimes}
	\begin{tabular}{@{}ccc@{}}
		\toprule
		$\mathcal V_*$ & $\operatorname{rank}\mathcal D$ & Properties\\
		\midrule
		$\neq\{0\}$ & arbitrary & none\\
		$\{0\}$ & $<mn$ & AC, ANC, ENC\\
		$\{0\}$ & $mn$ & AC, ANC, ENC, EC\\
		\bottomrule
	\end{tabular}
\end{table}

The following example illustrates the role of the joint diffusion rank in
\cref{tab:classification-regimes}.

\begin{example}\label{ex:singular-diffusion}
	Take $n=1$, $r=3$, $m=2$, $A=C_1=C_2=0$, and
	$B=(1,0,0)$, $D_1=(0,1,0)$, $D_2=(0,1,\varepsilon)$, where
	$\varepsilon\in\mathbb R$. Then \cref{eq:system} becomes
	\begin{align*}
	dy=u_1\,dt+u_2\,dW^1+(u_2+\varepsilon u_3)\,dW^2.
\end{align*}
	The zero-observation constraint is
	\begin{align*}
	B^{\top}v+D_1^{\top}w_1+D_2^{\top}w_2
	=\begin{pmatrix}v\\w_1+w_2\\\varepsilon w_2\end{pmatrix}=0.
\end{align*}
	Thus $\mathcal V_1=\{0\}$ for every $\varepsilon$. By
	\cref{thm:geometry}, AC, ANC and ENC hold at every $T>0$, whereas EC
	holds if and only if $\varepsilon\neq0$: the rank of $\mathcal D$ is
	one at $\varepsilon=0$ and two otherwise. Both $D_1$ and $D_2$, as
	well as their horizontal concatenation, have full row rank even at
	$\varepsilon=0$.
\end{example}

We next express the geometric classification through finite rank tests in
the given coefficient matrices, and obtain a Kalman-type full-rank
criterion for exact controllability when $\mathcal D$ is surjective.

Recall the definition of $\mathcal D$ in \cref{10.1-eq1}. Set
\begin{align}\label{eq:pair-start}
\mathsf K_1=B^{\top},\qquad
\mathsf N_1=\mathcal D^{\top}.
\end{align}
For $k\geq1$, write $\mathsf K_k\in\mathbb R^{a_k\times n}$ and
$\mathsf N_k\in\mathbb R^{a_k\times b_k}$, and form
\begin{align}\label{eq:pair}
	\mathsf K_{k+1}=
	\begin{pmatrix}
		B^{\top}\\
		0_{ma_k\times n}\\
		\mathsf K_kA^{\top}
	\end{pmatrix},\qquad
	\mathsf N_{k+1}=
	\begin{pmatrix}
		\mathcal D^{\top}&0&0\\
		I_m\otimes\mathsf K_k&I_m\otimes\mathsf N_k&0\\
		\mathsf K_k\mathcal C^{\top}&0&\mathsf N_k
	\end{pmatrix},
\end{align}
where $a_k=r\sum\limits_{\ell=0}^{k-1}(m+1)^\ell$ and
$b_k=mn\sum\limits_{\ell=0}^{k-1}(m+1)^\ell$.
The three column blocks of $\mathsf N_{k+1}$ have widths $mn$, $mb_k$
and $b_k$, and its three row blocks have heights $r$, $ma_k$ and
$a_k$. Put $\delta_0=0$ and, for $k\geq1$,
\begin{align}\label{eq:delta}
	\delta_k=\operatorname{rank}[\,\mathsf N_k\ \mathsf K_k\,]
	-\operatorname{rank}\mathsf N_k.
\end{align}
The block pair represents each obstruction space and determines its
codimension.

\begin{theorem}\label{thm:rank}
	For every $k\geq1$,
	\begin{align}\label{eq:rank-identity}
		\mathcal V_k
		=\bigl\{v\in\mathbb R^n:\exists\alpha\in\mathbb R^{b_k},\quad
		\mathsf K_kv+\mathsf N_k\alpha=0\bigr\},
	\end{align}
	and
	\begin{align}\label{eq:rank-codimension}
		\delta_k=n-\dim\mathcal V_k.
	\end{align}
\end{theorem}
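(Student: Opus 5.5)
The plan is to prove \cref{eq:rank-identity} by induction on $k$, reading off the block rows of \cref{eq:pair} as the defining conditions \cref{eq:map}, and then to derive \cref{eq:rank-codimension} by a rank--nullity count. Write
\begin{align*}
S_k=\{v\in\mathbb R^n:\exists\alpha\in\mathbb R^{b_k},\ \mathsf K_kv+\mathsf N_k\alpha=0\}.
\end{align*}
For $k=1$, $S_1$ is the set of $v$ for which some $w=(w_1,\dots,w_m)\in(\mathbb R^n)^m$ satisfies $B^\top v+\mathcal D^\top w=0$, where $\mathcal D^\top w=\sum_j D_j^\top w_j$. Since $\mathcal V_0=\mathbb R^n$, the remaining conditions in \cref{eq:map} are vacuous, so $S_1=\mathcal V_1$.

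For the inductive step, I would split $\alpha\in\mathbb R^{b_{k+1}}$ along the three column blocks of $\mathsf N_{k+1}$ as $\alpha=(w,\beta_1,\dots,\beta_m,\gamma)$, with $w=(w_1,\dots,w_m)\in\mathbb R^{mn}$ and $\beta_j,\gamma\in\mathbb R^{b_k}$. The three block rows of $\mathsf K_{k+1}v+\mathsf N_{k+1}\alpha=0$ then read as follows.
\begin{itemize}
\item The first row is $B^\top v+\sum_jD_j^\top w_j=0$.
\item The middle row is $\mathsf K_kw_j+\mathsf N_k\beta_j=0$ for each $j$, that is, $w_j\in S_k$.
\item The last row is $\mathsf K_k\bigl(A^\top v+\mathcal C^\top w\bigr)+\mathsf N_k\gamma=0$. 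Since $\mathcal C^\top w=\sum_jC_j^\top w_j$, this says $A^\top v+\sum_jC_j^\top w_j\in S_k$.
\end{itemize}
Hence $S_{k+1}=\mathcal R'(S_k)$, where $\mathcal R'(V)$ is defined exactly like $\mathcal R(V)$ but without requiring $v\in V$.

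The one point needing care, and the main obstacle, is this missing constraint $v\in S_k$. I would handle it by showing that $(S_k)$ is decreasing. Clearly $\mathcal R'$ is monotone: $V\subset V'$ implies $\mathcal R'(V)\subset\mathcal R'(V')$. The base case holds because $S_2=\mathcal R'(S_1)\subset\mathcal R'(\mathbb R^n)=S_1$. If $S_{k}\subset S_{k-1}$, then $S_{k+1}=\mathcal R'(S_k)\subset\mathcal R'(S_{k-1})=S_k$. Consequently $\mathcal R'(S_k)=\mathcal R'(S_k)\cap S_k=\mathcal R(S_k)$. Using the inductive hypothesis $S_k=\mathcal V_k$, we get $S_{k+1}=\mathcal R(\mathcal V_k)=\mathcal V_{k+1}$, which proves \cref{eq:rank-identity}.

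For \cref{eq:rank-codimension}, let $q$ denote the quotient map $\mathbb R^{a_k}\to\mathbb R^{a_k}/\operatorname{ran}\mathsf N_k$, and consider the linear map $v\mapsto q(\mathsf K_kv)$. By \cref{eq:rank-identity}, its kernel is exactly $\mathcal V_k$. Its image is $(\operatorname{ran}\mathsf K_k+\operatorname{ran}\mathsf N_k)/\operatorname{ran}\mathsf N_k$, whose dimension is $\operatorname{rank}[\,\mathsf N_k\ \mathsf K_k\,]-\operatorname{rank}\mathsf N_k=\delta_k$. Rank--nullity then gives $\delta_k=n-\dim\mathcal V_k$.
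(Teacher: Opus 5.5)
Your proof is correct and follows essentially the paper's route: induction that reads the three block rows of $\mathsf K_{k+1}v+\mathsf N_{k+1}\alpha=0$ as the conditions defining $\mathcal R$, recovery of the missing constraint $v\in\mathcal V_k$, and the quotient-map rank--nullity count for $\delta_k$. The only difference is cosmetic: you recover $v\in\mathcal V_k$ from monotonicity of the unconstrained operator $\mathcal R'$, which gives $S_{k+1}\subset S_k$ at the level of sets, whereas the paper runs an inner induction on $\ell\leq k$ using $\mathcal V_k\subset\mathcal V_\ell$ from \cref{lem:flag}.
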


At step $n$, this identity yields the controllability criterion.

\begin{corollary}\label{cor:rank-control}
	For every $T>0$, approximate controllability, approximate null
	controllability, and exact null controllability are each equivalent to
	\begin{align}\label{eq:rank-test}
		\operatorname{rank}[\mathsf N_n\ \mathsf K_n]
		-\operatorname{rank}\mathsf N_n=n.
	\end{align}
	Exact controllability is equivalent to \cref{eq:rank-test} together with
	$\operatorname{rank}\mathcal D=mn$.
\end{corollary}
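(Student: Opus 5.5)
The corollary is a direct combination of \cref{thm:geometry} and \cref{thm:rank} at the index $k=n$, and I would prove it that way.

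First I apply \cref{thm:rank} with $k=n$. The codimension identity \cref{eq:rank-codimension} gives
\begin{align*}
\operatorname{rank}[\mathsf N_n\ \mathsf K_n]-\operatorname{rank}\mathsf N_n=\delta_n=n-\dim\mathcal V_n=n-\dim\mathcal V_*.
\end{align*}
Here I use the definition $\mathcal V_*=\mathcal V_n$. By \cref{lem:flag}, $\mathcal V_n$ is also the stationary limit of the decreasing sequence $(\mathcal V_k)_{k\geq0}$. Therefore \cref{eq:rank-test} holds if and only if $\dim\mathcal V_*=0$, that is, if and only if $\mathcal V_*=\{0\}$.

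Next I substitute this equivalence into \cref{thm:geometry}. Part (i) of that theorem says that, for every $T>0$, AC, ANC and ENC are each equivalent to $\mathcal V_*=\{0\}$. Hence each is equivalent to \cref{eq:rank-test}. Part (ii) says that EC is equivalent to $\mathcal V_*=\{0\}$ together with $\operatorname{rank}\mathcal D=mn$. Replacing the first condition by \cref{eq:rank-test} gives the second claim of the corollary.

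There is no real obstacle, since all the substance sits in \cref{thm:geometry} and \cref{thm:rank}. The only points to check are bookkeeping ones. The rank test must be taken at index $n$, which is exactly where the flag is guaranteed to be stationary. The conventions for matrices with zero rows or columns must keep $\delta_n$ well defined in degenerate cases, for example when $r=0$.
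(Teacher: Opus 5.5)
Your proposal is correct and is the same argument as the paper's proof. The paper likewise applies \cref{thm:rank} at $k=n$, so that $\delta_n=n-\dim\mathcal V_*$ since $\mathcal V_*=\mathcal V_n$, and then substitutes this into both parts of \cref{thm:geometry}.
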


\begin{proof}
	Apply \cref{thm:geometry,thm:rank} with $\mathcal V_*=\mathcal V_n$.
\end{proof}

When $\operatorname{rank}\mathcal D=mn$, let
$J=\mathcal D^\top(\mathcal D\mathcal D^\top)^{-1}$ and partition
$J=[J_1\ \cdots\ J_m]$, where $J_j\in\mathbb R^{r\times n}$. Define
$E_j=BJ_j$, $F=A-BJ\mathcal C$, and $H=B(I_r-J\mathcal D)$.
Set $H_0=H$ and, for $k\geq0$, define
\begin{align}\label{eq:word-blocks}
	H_{k+1}=[\,E_1H_k\ \cdots\ E_mH_k\ FH_k\,].
\end{align}
The matrix $H_k$ has $n$ rows and $r(m+1)^k$ columns, corresponding
to all products of exactly $k$ factors from $\{E_1,\dots,E_m,F\}$
followed by $H$. Thus the concatenation of $H_0,\dots,H_{n-1}$ has
$r\sum\limits_{k=0}^{n-1}(m+1)^k$ columns.

These matrices give a Kalman-type rank condition for exact controllability.

\begin{theorem}\label{prop:exact-rank}
	Assume that $\operatorname{rank}\mathcal D=mn$. At every $T>0$, system
	\cref{eq:system} is exactly controllable if and only if
	\begin{align}\label{eq:exact-rank}
		\operatorname{rank}[\,H_0\ H_1\ \cdots\ H_{n-1}\,]=n.
	\end{align}
\end{theorem}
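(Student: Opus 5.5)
By \cref{thm:geometry}(ii), when $\operatorname{rank}\mathcal D=mn$ exact controllability is equivalent to $\mathcal V_*=\{0\}$. It therefore suffices to prove, under the surjectivity assumption, that
\begin{align*}
\mathcal V_*^{\perp}=\operatorname{range}[\,H_0\ H_1\ \cdots\ H_{n-1}\,],
\end{align*}
or equivalently that $\mathcal V_*$ is the largest subspace of $\ker H^\top$ invariant under $E_1^\top,\dots,E_m^\top,F^\top$. The plan is to remove the martingale coefficients $w_j$ from \cref{eq:map} using the right inverse $J$, and then recognize the resulting recursion as a classical unobservable-subspace computation.

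Since $\mathcal D$ has full row rank, $\mathcal D^\top$ is injective, and $\mathbb R^r=\operatorname{range}\mathcal D^\top\oplus\ker\mathcal D$. Note that $J\mathcal D$ is the orthogonal projector onto $\operatorname{range}\mathcal D^\top$, and $J^\top\mathcal D^\top=I_{mn}$. Write $w=(w_1,\dots,w_m)$, so that $\sum_j D_j^\top w_j=\mathcal D^\top w$. The constraint $B^\top v+\mathcal D^\top w=0$ then splits into two parts:
\begin{itemize}
\item applying $(I-J\mathcal D)^\top=I-J\mathcal D$ gives $H^\top v=(I-J\mathcal D)B^\top v=0$;
\item applying $J^\top$ gives $w=-J^\top B^\top v$, that is, $w_j=-J_j^\top B^\top v=-E_j^\top v$.
\end{itemize}
Conversely, if $H^\top v=0$, then $B^\top v\in\operatorname{range}\mathcal D^\top$, and this choice of $w$ satisfies the constraint. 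Substituting into the drift condition gives
\begin{align*}
A^\top v-\sum_j C_j^\top J_j^\top B^\top v=(A-BJ\mathcal C)^\top v=F^\top v .
\end{align*}
Hence, for every subspace $V$,
\begin{align*}
\mathcal R(V)=\{v\in V:\ H^\top v=0,\ E_j^\top v\in V\ (j=1,\dots,m),\ F^\top v\in V\},
\end{align*}
because $w$ is uniquely determined by $v$ and must lie in $V$. This computation uses only the definition of $\mathcal R$.

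Next I prove by induction the closed form
\begin{align*}
\mathcal V_k=\{v:\ H_\ell^\top v=0\ \text{for }0\le \ell\le k-1\}.
\end{align*}
Since $H_{\ell+1}^\top$ stacks the blocks $H_\ell^\top E_j^\top$ and $H_\ell^\top F^\top$, the condition "$E_j^\top v,F^\top v\in\mathcal V_k$" is exactly $H_{\ell+1}^\top v=0$ for $\ell\le k-1$. Together with $H_0^\top v=0$, this gives $\mathcal V_{k+1}$, while $v\in\mathcal V_k$ is automatically implied. One point needs checking: the formula must hold for all $v\in\mathbb R^n$, since the membership $v\in V=\mathcal V_k$ is redundant. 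Setting $k=n$ and using $\mathcal V_*=\mathcal V_n$ from \cref{lem:flag} gives $\mathcal V_*=\ker[\,H_0\ \cdots\ H_{n-1}\,]^\top$. Therefore $\mathcal V_*=\{0\}$ if and only if \cref{eq:exact-rank} holds.

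The main obstacle is the first step: showing that the $w_j$ are forced uniquely as $-E_j^\top v$, and that the zero-observation condition reduces exactly to $H^\top v=0$. Both rely on the injectivity of $\mathcal D^\top$ and on the projector identities for $J$, so I would verify the transposes carefully, in particular $(BJ_j)^\top=J_j^\top B^\top$ and the sign in $F$. The rest is the standard Kalman-type unobservable-subspace induction, with stationarity by step $n$ supplied by \cref{lem:flag}.
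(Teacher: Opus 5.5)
Your proof is correct. Its first half is the same as the paper's: you eliminate the $w_j$ through $J$ to obtain $H^\top v=0$, $w_j=-E_j^\top v$ and drift $F^\top v$.

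The second half takes a different and more direct route. The paper does not compute the iterates $\mathcal V_k$ explicitly. It uses \cref{lem:flag} to identify $\mathcal V_*$ as the largest $(E_1^\top,\dots,E_m^\top,F^\top)$-invariant subspace of $\ker H^\top$. Separately, it shows by a dimension count that $S_{n-1}=\operatorname{im}[\,H_0\ \cdots\ H_{n-1}\,]$ is the smallest $(E_1,\dots,E_m,F)$-invariant subspace containing $\operatorname{im}H$. It then matches the two through an orthogonal-complement duality argument.

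You instead run the induction on the flag itself and get $\mathcal V_k=\ker[\,H_0\ \cdots\ H_{k-1}\,]^\top$ for every $k$. This needs neither the maximality part of \cref{lem:flag} nor any stationarity argument. Your closing appeal to stationarity is superfluous, because $\mathcal V_*$ is defined as $\mathcal V_n$ and \cref{thm:geometry} is stated in terms of it.

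Each route has its own payoff. Yours is shorter and gives a stage-wise identity: when $\mathcal D$ is onto, $\delta_k=\operatorname{rank}[\,H_0\ \cdots\ H_{k-1}\,]$ for all $k$, which ties the rank differences of \cref{thm:rank} directly to the Kalman-type blocks. The paper's gives the intrinsic description of $\mathcal V_*^\perp$ as the smallest subspace containing $\operatorname{im}H=B\ker\mathcal D$ that is invariant under $E_1,\dots,E_m,F$. That reachable-subspace picture is the interpretation used in \cref{rem:surjective-diffusion} and in the comparison with Peng's reduced criterion.
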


\subsection{Relation to previous work}\label{intro:comparison}

The spaces in \cref{eq:map,eq:flag} extend Goreac's finite subspace
iteration for AC and ANC \cite[Theorem~3 and Remark~4]{Goreac} from
scalar to multidimensional Brownian noise, allowing arbitrary rank of
$\mathcal D$. \Cref{thm:geometry} further shows that the vanishing of
the same obstruction characterizes ENC at every prescribed $T>0$.

Guo, Yu and Zhang \cite{GYZ} characterize stabilizability through
approximate null controllability with cost: for each prescribed contraction
factor $\delta$, the control horizon may be chosen as $T=T(\delta)$.
In \cref{thm:null-feedback}, a linear state feedback with a deterministic
gain achieves exact null control at every prescribed $T>0$, with finite
energy and a polynomial small-time bound.

For scalar noise and a surjective diffusion-control matrix, Liu and Peng
\cite[Definition~8 and Theorem~9]{LiuPeng2002} prove that, for the
equivalent system with freely chosen diffusion, exact controllability is
equivalent to arbitrary exponential convergence-index assignment by linear
state feedback. Our classification also covers arbitrary joint diffusion rank,
where ENC need not imply EC.

S\^{\i}rbu and Tessitore \cite[Theorems~3.4 and~4.2]{ST} characterize
exact null controllability in Hilbert spaces by a singular Riccati equation
and a dual observability inequality, and obtain optimal null controls in
feedback form. On the common finite-dimensional, constant-coefficient
class, our result gives a finite algebraic test for this property. Their
model uses independent Brownian families for the state and control noises;
\cref{eq:system} also allows these terms to share the same Brownian channels.

For scalar noise and $\operatorname{rank}D_1=n$, the matrices $E_1,F,H$
defined before \cref{prop:exact-rank} express Peng's reduced criterion
\cite[Theorem~3.2]{Peng1994}, with the finite truncation of L\"u and
Zhang \cite[Theorem~6.10]{Lue2021a}, directly in $A,B,C_1,D_1$
without choosing a basis of $\ker D_1$. For the original system
considered in their treatment, L\"u and Zhang establish sufficiency but
leave necessity open, and ask in \cite[Remark~6.11]{Lue2021a} for a
rank characterization directly in the original coefficients.
\Cref{prop:exact-rank} gives the requested formulation and also covers
multidimensional noise.

Sun
\cite[Propositions~3.2--3.3 and Theorem~3.12]{Sun} gives finite-word,
Gramian, and Hautus characterizations for backward-structured systems with
freely chosen diffusion, where EC and ENC coincide.
The rank-difference criterion in \cref{cor:rank-control} additionally
covers nonsurjective $\mathcal D$, for which the diffusion cannot be
prescribed freely.
\Cref{sec:algorithm} evaluates this test by equivalent orthogonal
elimination with polynomial storage and arithmetic cost, without forming
the exponentially large block matrices.

Goreac, Li and Zhang \cite{GoreacLiZhang2026} study reduced-rank
mean-field systems by allowing weaker control integrability or
prescribing Gaussian terminal laws; our classification retains
$\mathcal U_T=L^2_{\mathbb F}(0,T;\mathbb R^r)$ and the four
controllability requirements in \cref{def:control} for the
constant-coefficient system \cref{eq:system}.

For broader background on stochastic controllability, we refer to
\cite{Zabczyk1981,EhrhardtKliemann1982} for early algebraic criteria,
\cite{BashirovMahmudov1999} for probabilistic controllability of
additive-noise systems, \cite{Mahmudov2001} for partial observation, and
\cite{BuckdahnQuincampoixTessitore2006} for invariant-subspace criteria with
drift-only control. Gramian and duality characterizations are developed in
\cite{Liu2010,WYYY}, while other terminal requirements include finite-chaos
approximation \cite{Dou2019}, exact output control \cite{WangYu2020}, and
partial controllability with terminal constraints \cite{Song2023}.

The remainder of the paper is organized as follows.
\Cref{sec:proof} proves the classification and feedback theorems.
\Cref{sec:rank} proves the rank criteria and discusses their
deterministic reduction and nonrobustness.
\Cref{sec:algorithm} presents the orthogonal elimination algorithm
and a numerical comparison of input configurations in penalized stochastic
linear-quadratic (LQ) control. \Cref{sec:conclusion} concludes the paper.
\section{Proofs of \texorpdfstring{\Cref{thm:geometry,thm:null-feedback}}{the classification and feedback theorems}}\label{sec:proof}

We first prove the equivalence of AC, ANC and $\mathcal V_*=\{0\}$ in
\Cref{sec:approximate}. We then prove \cref{thm:null-feedback}, which
supplies the implication to ENC needed for part~(i) of \Cref{thm:geometry}.

The feedback construction proceeds in three steps. In
\Cref{sec:strict-alternative}, we use approximate null controls to obtain
stabilizing feedbacks with any prescribed mean-square decay rate. In
\Cref{sec:feedback-growth}, a semialgebraic argument bounds their gains and
Lyapunov condition numbers polynomially in that rate. In
\Cref{sec:switching}, we apply these feedbacks on intervals accumulating at
$T$: the exponential state decay dominates the polynomial growth and makes
the control energy summable, establishing \Cref{thm:null-feedback}. We then
complete the proof of \Cref{thm:geometry} in \Cref{sec:switching}.

\subsection{Approximate controllability and the obstruction space}
\label{sec:approximate}

We prove the geometric characterization of AC and ANC using the
conditional-invariance approach of Goreac \cite[Theorem~3]{Goreac}.

\begin{lemma}\label{lem:approximate}
At every $T>0$, system \cref{eq:system} is approximately controllable if
and only if it is approximately null controllable, and these properties hold
if and only if $\mathcal V_*=\{0\}$.
\end{lemma}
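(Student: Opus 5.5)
We argue by duality through the identity \cref{eq:duality}, reducing both approximate properties to unique-continuation statements for the adjoint BSDE \cref{eq:adjoint}. By linearity, the terminal map $u\mapsto y(T;0,u)$ is a bounded operator $L_T:\mathcal U_T\to L^2_{\mathcal F_T}(\Omega;\mathbb R^n)$. Its adjoint is $\eta\mapsto B^\top z+\sum_j D_j^\top Z_j$. AC is equivalent to density of the range of $L_T$. ANC is equivalent to $\{y(T;y_0,0):y_0\in\mathbb R^n\}\subset\overline{\operatorname{Ran}L_T}$. Using \cref{eq:duality}, the first holds if and only if every adjoint solution with zero observation $B^\top z+\sum_jD_j^\top Z_j\equiv0$ has $\eta=0$. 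The second holds if and only if every such solution has $z(0)=0$. So AC implies ANC trivially. It remains to prove that ANC implies $\mathcal V_*=\{0\}$ and that $\mathcal V_*=\{0\}$ implies AC.

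\textbf{$\mathcal V_*=\{0\}\Rightarrow$ AC.} Take an adjoint solution with zero observation. We prove by induction on $k$ that $z(t)\in\mathcal V_k$ and $Z_j(t)\in\mathcal V_k$ for a.e.\ $t$, a.s. The case $k=0$ is trivial. Assume it holds for $k$. Since $z$ is a continuous semimartingale lying in $\mathcal V_k$, the projection $\Pi_{\mathcal V_k^\perp}z\equiv0$. Its martingale part then vanishes, which forces $Z_j\in\mathcal V_k$. Its drift part also vanishes, which forces $A^\top z+\sum_jC_j^\top Z_j\in\mathcal V_k$. Together with the zero-observation condition, these are exactly the conditions in \cref{eq:map} with $v=z(t)$ and $w_j=Z_j(t)$. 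Hence $z(t)\in\mathcal V_{k+1}$ for a.e.\ $t$, and by continuity for all $t$. To get $Z_j\in\mathcal V_{k+1}$ we repeat the projection argument with $\mathcal V_{k+1}$ in place of $\mathcal V_k$. Taking $k=n$ gives $z\equiv0$, so $\eta=z(T)=0$.

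\textbf{ANC $\Rightarrow\mathcal V_*=\{0\}$.} Suppose $\mathcal V_*\ne\{0\}$, and take $\Gamma_j$ from \cref{lem:flag}. Put $M=-(A^\top+\sum_jC_j^\top\Gamma_j)$ and consider the forward linear SDE
\[
dz=Mz\,dt+\sum_j\Gamma_jz\,dW^j,\qquad z(0)=v\in\mathcal V_*\setminus\{0\}.
\]
By the invariance conditions in \cref{eq:invariance}, $z$ stays in $\mathcal V_*$, because the projected equation onto $\mathcal V_*^\perp$ is homogeneous with zero initial datum. Setting $Z_j=\Gamma_jz$, the pair $(z,Z)$ solves \cref{eq:adjoint} with $\eta=z(T)\in L^2_{\mathcal F_T}$. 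Its observation is $(B^\top+\sum_jD_j^\top\Gamma_j)z=0$, while $z(0)=v\ne0$. This contradicts the duality characterization of ANC.

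The delicate point is the induction step in the sufficiency direction: justifying that a semimartingale which vanishes identically has both zero martingale part and zero drift, and that the resulting conditions hold pointwise in $(t,\omega)$ a.e. This follows from uniqueness of the semimartingale decomposition. Everything else is a routine use of closed-range and annihilator duality together with \cref{lem:flag}.
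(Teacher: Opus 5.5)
Your proposal is correct and follows essentially the same route as the paper. Both use the duality reduction through the adjoint of the terminal map, the same induction along the flag (based on the vanishing of the projection of $z$ onto $\mathcal V_k^\perp$), and the same forward SDE built from the matrices $\Gamma_j$ of \cref{lem:flag} to produce a zero-observation adjoint solution with $z(0)=v\neq0$. The only difference is cosmetic: you phrase ANC dually as ``$z(0)=0$ for every zero-observation adjoint solution'', whereas the paper gets the contradiction directly from $\mathbb E\langle y(T;v,u),\eta\rangle=|v|^2$ for all $u$ together with Cauchy--Schwarz.
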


\begin{proof}
Fix $T>0$ and define
$M_T:\mathcal U_T\to L^2_{\mathcal F_T}(\Omega;\mathbb R^n)$ by
$M_Tu=y(T;0,u)$. This linear map is bounded by
\cref{eq:state-estimate}, and \cref{eq:duality} gives
\begin{align*}
M_T^*\eta=B^{\top}z+\sum_{j=1}^mD_j^{\top}Z_j,
\end{align*}
where $(z,Z)$ solves \cref{eq:adjoint}. By linearity of the control system \cref{eq:system}, AC is equivalent to density of the range of $M_T$ in
$L^2_{\mathcal F_T}(\Omega;\mathbb R^n)$, or equivalently to
$\ker M_T^*=\{0\}$.

Suppose that $\mathcal V_*=\{0\}$ and $M_T^*\eta=0$. We show by
induction that $z(t)\in\mathcal V_k$ for all $t\in[0,T]$, $\mathbb P$-a.s.
For $k=0$ this is trivial. Assume it holds for some $k$. Since
$z(t)\in\mathcal V_k$, its projection onto $\mathcal V_k^\perp$
vanishes. Hence the projection of \cref{eq:adjoint} onto
$\mathcal V_k^\perp$ has zero drift and zero martingale part.
Consequently, the quadratic variation of the martingale part satisfies
\begin{align*}
\sum_{j=1}^m\int_0^T
\bigl|\Pi_{\mathcal V_k^\perp}Z_j(t)\bigr|^2\,dt=0,
\quad\mathbb P\text{-a.s.}
\end{align*}
Thus $Z_j(t)\in\mathcal V_k$ for every $j$, while the drift condition
is
\begin{align*}
A^{\top}z(t)+\sum_{j=1}^mC_j^{\top}Z_j(t)\in\mathcal V_k,
\end{align*}
almost everywhere with respect to the product measure
$dt\otimes d\mathbb P$. Together with
$
B^{\top}z+\sum\limits_{j=1}^mD_j^{\top}Z_j=0,
$
\cref{eq:map,eq:flag} give $z(t)\in\mathcal V_{k+1}$ almost everywhere.
Since $z$ is continuous and $\mathcal V_{k+1}$ is closed, this inclusion
holds for all $t\in[0,T]$, $\mathbb P$-a.s.  At $k=n$, we obtain $z=0$
and hence $\eta=z(T)=0$. Thus AC holds, and ANC follows by taking the
target to be zero.

Conversely, suppose that $\mathcal V_*\neq\{0\}$. Choose
$0\neq v\in\mathcal V_*$ and matrices $\Gamma_1,\dots,\Gamma_m$ as in
\cref{lem:flag}, and let $z$ solve
\begin{align*}
\begin{cases}
\displaystyle dz(t)=-\Bigl(A^{\top}+\sum_{j=1}^mC_j^{\top}\Gamma_j\Bigr)z(t)\,dt
+\sum_{j=1}^m\Gamma_jz(t)\,dW^j(t),\\[4pt]
z(0)=v.
\end{cases}
\end{align*}
The solution $z$ belongs to
$L^2_{\mathbb F}\bigl(\Omega;C([0,T];\mathbb R^n)\bigr)$ and remains in
$\mathcal V_*$ by \cref{eq:invariance}. Thus
\begin{align*}
Z=(\Gamma_1z,\dots,\Gamma_mz)
\in L^2_{\mathbb F}\bigl(0,T;(\mathbb R^n)^m\bigr)
\end{align*}
and $\eta=z(T)\in L^2_{\mathcal F_T}(\Omega;\mathbb R^n)$. The pair
$(z,Z)$ solves \cref{eq:adjoint} and satisfies
$B^{\top}z+\sum\limits_{j=1}^mD_j^{\top}Z_j=0$. Taking $y_0=v$ in
\cref{eq:duality} gives
\begin{align*}
\mathbb E\langle y(T;v,u),\eta\rangle=|v|^2
\qquad\text{for every }u\in\mathcal U_T.
\end{align*}
If ANC held, there would be controls $u_\ell\in\mathcal U_T$ with
$y(T;v,u_\ell)\to0$ in $L^2_{\mathcal F_T}(\Omega;\mathbb R^n)$.
The Cauchy--Schwarz inequality would then force the left-hand side to tend
to zero, contradicting $v\neq0$. Hence ANC fails, which proves the
converse implication.
\end{proof}

\subsection{Arbitrary-rate stabilizability}\label{sec:strict-alternative}

We first obtain stabilizing feedbacks with any prescribed mean-square decay
rate. We write $H>0$ (respectively, $H\geq 0$) when a symmetric
matrix $H$ is positive definite (respectively, positive semidefinite), and
use $<,\leq $ analogously.

For the $m\times m$ identity matrix $I_m$ and any matrix $M$, the
Kronecker product is
\begin{align*}
I_m\otimes M=
\begin{pmatrix}
M&0&\cdots&0\\
0&M&\ddots&\vdots\\
\vdots&\ddots&\ddots&0\\
0&\cdots&0&M
\end{pmatrix},
\end{align*}
with $m$ copies of $M$ on the diagonal.

For $\rho>0$, a symmetric $P\in\mathbb R^{n\times n}$, and
$U\in\mathbb R^{r\times n}$, put
\begin{align}\label{eq:lmi}
\mathcal M_\rho(P,U)=
\begin{pmatrix}
AP+PA^{\top}+BU+U^{\top}B^{\top}+\rho P & (\mathcal CP+\mathcal DU)^\top\\
\mathcal CP+\mathcal DU & -I_m\otimes P
\end{pmatrix}.
\end{align}
The matrix $\mathcal M_\rho(P,U)$ has size $(m+1)n$. A pair $(P,U)$
is called feasible at rate $\rho$ if $\mathcal M_\rho(P,U)<0$; its
lower-right block then gives $P>0$. Approximate null controllability
gives feasibility at every positive rate.

\begin{lemma}\label{lem:arbitrary-rate}
If $\mathcal V_*=\{0\}$, then for every $\rho>0$ there exist a symmetric
matrix $P\in\mathbb R^{n\times n}$ and a matrix
$U\in\mathbb R^{r\times n}$ such that
$\mathcal M_\rho(P,U)<0$.
\end{lemma}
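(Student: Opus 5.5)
The plan is to pass from approximate null controllability to a mean-square exponentially stabilizing state feedback with decay rate larger than $\rho$, and then convert that feedback into a feasible pair $(P,U)$ via the Schur complement. First, the Schur complement reduces the claim to a Lyapunov inequality. Since $I_m\otimes P>0$ whenever $P>0$, the condition $\mathcal M_\rho(P,U)<0$ is equivalent to $P>0$ together with
\begin{align*}
AP+PA^{\top}+BU+U^{\top}B^{\top}+\rho P+(\mathcal CP+\mathcal DU)^{\top}(I_m\otimes P)^{-1}(\mathcal CP+\mathcal DU)<0 .
\end{align*}
Substituting $U=KP$ turns this into $\mathcal L_K(P)+\rho P<0$, where
\begin{align*}
\mathcal L_K(P)=(A+BK)P+P(A+BK)^{\top}+\sum_{j=1}^m(C_j+D_jK)^{\top}P(C_j+D_jK)
\end{align*}
is a generalized Lyapunov operator acting on symmetric matrices.

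Second, I reduce this inequality to a stability statement for a closed-loop system. The operator $\mathcal L_K$ generates a positive semigroup on the cone of symmetric positive semidefinite matrices. By the standard Lyapunov theorem for resolvent-positive operators, the inequality $\mathcal L_K(P)+\rho P<0$ has a solution $P>0$ if and only if the spectral abscissa of $\mathcal L_K$ is strictly smaller than $-\rho$. Because $\mathcal L_K$ and its adjoint have the same spectrum, it suffices to find one of two things. The first option is a gain $K$ whose closed loop satisfies
\begin{align*}
\mathbb E|y(t)|^2\le c\,e^{-\gamma t}|y_0|^2
\end{align*}
with $\gamma>\rho$; this closed loop is the one whose second-moment evolution is governed by $\mathcal L_K$ or by its transpose-type variant. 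The second option, equivalently, is a gain that makes the shifted system, obtained by replacing $A$ with $A+\tfrac{\rho}{2}I$, mean-square exponentially stable. So the task becomes: find a gain $K$ that stabilizes the shifted system in mean square.

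Third, I construct open-loop controls with a fast decay rate. By \cref{lem:approximate}, $\mathcal V_*=\{0\}$ gives approximate null controllability at $T=1$. Applying this to a basis of $\mathbb R^n$ and using linearity in $y_0$, I obtain controls with two properties for some constant $c_1$: the energy bound $\mathbb E\int_0^1|u|^2\le c_1|y_0|^2$, and the terminal bound $\mathbb E|y(1)|^2\le\varepsilon|y_0|^2$, where $\varepsilon<e^{-2\rho-2}$. Here the control for a random initial state $\xi\in L^2_{\mathcal F_k}$ is formed as $\sum_i\xi_i u^{(i)}(\cdot-k)$, with each $u^{(i)}$ driven by the shifted Brownian motion $W(\cdot+k)-W(k)$. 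Concatenating these controls on the intervals $[k,k+1]$ via the Markov property gives an open-loop control on $[0,\infty)$. Its state satisfies
\begin{align*}
\mathbb E|y(k)|^2\le\varepsilon^k|y_0|^2,
\end{align*}
and its weighted cost $\mathbb E\int_0^\infty e^{\rho t}\bigl(|y|^2+|u|^2\bigr)\,dt$ is finite and bounded by $c|y_0|^2$. Setting $\tilde y=e^{\rho t/2}y$ and $\tilde u=e^{\rho t/2}u$, the pair $(\tilde y,\tilde u)$ is an open-loop $L^2$-stabilizing pair for the shifted system with coefficients $(A+\tfrac{\rho}{2}I,B,C_j,D_j)$.

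The main obstacle is the last step: passing from open-loop $L^2$-stabilizability of the shifted system to a stabilizing constant feedback. For this I would invoke the indefinite stochastic LQ theory (Ait Rami--Zhou). For the shifted system with cost weights $Q=I$ and $R=I$, finiteness of the value function for every initial state implies that the stochastic algebraic Riccati equation has a solution $\Pi\ge0$ that is stabilizing. The corresponding gain $K=-(I+\sum_jD_j^{\top}\Pi D_j)^{-1}(B^{\top}\Pi+\sum_jD_j^{\top}\Pi C_j)$ then makes the shifted closed loop mean-square exponentially stable. Because this equivalence between open-loop and closed-loop stabilizability is exactly the delicate point, I would at least verify the mechanism directly, in one of two ways. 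One way is to show that the value-function matrix $\Pi$ is a Lyapunov matrix for the optimal closed loop, using the dynamic programming identity and $Q=I>0$. The other is to argue by LMI duality: if the strict LMI were infeasible, a separating positive semidefinite multiplier would produce a nonzero adjoint-type solution that decays no faster than $e^{-\rho t}$. This would contradict the decay $\varepsilon^k$ through the duality identity \cref{eq:duality}. With $K$ in hand, the spectral abscissa of the shifted $\mathcal L_K$ is negative, so the unshifted $\mathcal L_K$ has abscissa below $-\rho$. Finally, setting $U=KP$ for the resulting Lyapunov matrix $P$ gives $\mathcal M_\rho(P,U)<0$.
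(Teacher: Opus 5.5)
Your overall route is the paper's route. It uses unit-horizon approximate null controls with small terminal error, restarted on each $[k,k+1]$ with the shifted Brownian motion (this requires writing $u^{(i)}$ as a nonanticipative path functional, as the paper does via \cite{ClaisseTalayTan2016}), a finite $e^{\rho t}$-weighted cost, the change of variables $X=e^{\rho t/2}y$, and a Riccati argument with unit weights for the shifted system. The concrete error is in your first step. With $U=KP$ the off-diagonal block of $\mathcal M_\rho$ is $(C_j+D_jK)P$, so the Schur complement gives
\begin{align*}
(A+BK)P+P(A+BK)^\top+\rho P+\sum_{j=1}^mP(C_j+D_jK)^\top P^{-1}(C_j+D_jK)P<0,
\end{align*}
not $\mathcal L_K(P)+\rho P<0$. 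Your $\mathcal L_K$ pairs the covariance form of the drift with the Lyapunov form of the diffusion. It is the Lyapunov operator of a different system, with drift $(A+BK)^\top$ and diffusion $C_j+D_jK$. Its spectral abscissa is therefore not governed by mean-square stability of your closed loop, and your final step, ``setting $U=KP$ for the resulting Lyapunov matrix $P$'', does not yield $\mathcal M_\rho(P,U)<0$. The second-moment operator $Y\mapsto(A+BK)Y+Y(A+BK)^\top+\sum_j(C_j+D_jK)Y(C_j+D_jK)^\top$ with $P=Y$ fails for the same reason. Multiplying the display above on both sides by $X=P^{-1}$ gives the correct equivalence
\begin{align*}
\mathcal M_\rho(P,KP)<0\iff P>0\ \text{and}\ (A+BK)^\top X+X(A+BK)+\rho X+\sum_{j=1}^m(C_j+D_jK)^\top X(C_j+D_jK)<0.
\end{align*}
So the Lyapunov matrix must be inverted: take $P=X^{-1}$ and $U=KX^{-1}$. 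This is the paper's choice $P=H_\rho^{-1}$, $U=K_\rho P$.

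With this correction, the resolvent-positivity and spectral-abscissa detour is unnecessary, and the ``direct verification'' you mention is exactly what is needed. The finite-horizon value matrices $\mathsf H_\rho(s)$ of the shifted problem are nondecreasing in $s$. They are bounded by the weighted cost of your concatenated control, so they converge to a solution $\Pi\geq0$ of the algebraic Riccati equation. With $K=-\mathsf R(\Pi)^{-1}\mathsf L(\Pi)$, that equation is the identity
\begin{align*}
(A+BK)^\top\Pi+\Pi(A+BK)+\rho\Pi+\sum_{j=1}^m(C_j+D_jK)^\top\Pi(C_j+D_jK)=-I_n-K^\top K.
\end{align*}
Any $\Pi\geq0$ satisfying it is positive definite: on a nonzero vector of $\ker\Pi$ the left side is nonnegative, while the right side is negative. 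Thus $X=\Pi$ directly, with no need to know beforehand that $K$ is stabilizing. Quoting the Ait Rami--Zhou theory instead does not suffice here. That infinite-horizon framework is formulated under a standing mean-square stabilizability hypothesis, which is the conclusion you are trying to reach.
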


\begin{proof}
\emph{Step 1: a linear family of approximate null controls.}
Fix $\rho>0$ and set $\delta=e^{-\rho}/2$, so that
$e^\rho\delta=1/2$. By \cref{lem:approximate}, for each coordinate vector
$e_i$ there is a control $u^{(i)}\in\mathcal U_1$ whose state
$y^{(i)}(t)=y(t;e_i,u^{(i)})$ satisfies
\begin{align*}
\mathbb E|y^{(i)}(1)|^2\leq\frac{\delta}{n},\qquad i=1,\dots,n.
\end{align*}
For $x=(x_1,\dots,x_n)^{\top}\in\mathbb R^n$, define
$u^x=\sum\limits_{i=1}^nx_iu^{(i)}$. By linearity, its state is
$y^x=\sum\limits_{i=1}^nx_iy^{(i)}$, and the Cauchy-Schwarz inequality gives
\begin{align}\label{eq:unit-contraction}
\mathbb E|y^x(1)|^2
\leq |x|^2\sum_{i=1}^n\mathbb E|y^{(i)}(1)|^2
\leq\delta|x|^2.
\end{align}
Each $u^{(i)}$ has finite energy, and \cref{eq:state-estimate} gives the
same property for $y^{(i)}$. Set
\begin{align*}
C_\rho:=\sum_{i=1}^n\mathbb E\int_0^1
\bigl(|y^{(i)}(s)|^2+|u^{(i)}(s)|^2\bigr)\,ds<\infty.
\end{align*}
Applying the Cauchy--Schwarz inequality to the sums defining $y^x$ and
$u^x$ and integrating in time gives
\begin{align}\label{eq:unit-energy}
\mathbb E\int_0^1\bigl(|y^x(s)|^2+|u^x(s)|^2\bigr)\,ds
\leq |x|^2\sum_{i=1}^n\mathbb E\int_0^1
\bigl(|y^{(i)}(s)|^2+|u^{(i)}(s)|^2\bigr)\,ds
=C_\rho|x|^2.
\end{align}
The controls $u^{(i)}$ are now fixed, so $C_\rho$ is independent of $x$.

\emph{Step 2: restarting the controls and obtaining finite weighted cost.}
The Wiener-space representation in
\cite[Propositions~9--10]{ClaisseTalayTan2016} allows us to write each
$u^{(i)}(s)$ as a nonanticipative measurable functional
$\Phi_i(s,W)$ of the path in $C([0,1];\mathbb R^m)$, up to
$ds\otimes d\mathbb P$-null sets. For an integer $j\geq0$, put
$W_j(s)=W(j+s)-W(j)$, $0\leq s\leq1$, and define
$u_j^{(i)}(s)=\Phi_i(s,W_j)$. Let $y_j^{(i)}$ be the corresponding
state starting from $e_i$ and driven by $W_j$. On $[j,j+1)$, use
\begin{align*}
u(j+s)=\sum_{i=1}^ny_i(j)u_j^{(i)}(s),\qquad 0\leq s<1,
\end{align*}
where $y_i(j)$ is the $i$th coordinate of the state already constructed
at time $j$. Linearity gives
\begin{align*}
y(j+s)=\sum_{i=1}^ny_i(j)y_j^{(i)}(s),\qquad 0\leq s\leq1.
\end{align*}
Conditionally on $\mathcal F_j$, the coefficients $y_i(j)$ are fixed,
whereas $W_j$ is a standard $m$-dimensional Brownian motion independent
of $\mathcal F_j$. The family $(y_j^{(i)},u_j^{(i)})_{i=1}^n$
consequently has the same joint law as $(y^{(i)},u^{(i)})_{i=1}^n$ and is
independent of $\mathcal F_j$. Thus \cref{eq:unit-contraction} gives
\begin{align}\label{eq:restart-contraction}
\mathbb E\bigl(|y(j+1)|^2\mid\mathcal F_j\bigr)\leq\delta|y(j)|^2.
\end{align}
Similarly, \cref{eq:unit-energy} gives
\begin{align}\label{eq:restart-energy}
\mathbb E\Bigl(\int_j^{j+1}\bigl(|y(t)|^2+|u(t)|^2\bigr)\,dt
\Bigm|\mathcal F_j\Bigr)\leq C_\rho|y(j)|^2.
\end{align}
This constructs a progressively measurable control with finite energy on
every bounded interval. Taking expectations in
\cref{eq:restart-contraction} and iterating gives
\begin{align*}
\mathbb E|y(j)|^2\leq\delta^j|y_0|^2.
\end{align*}
For the weighted cost on $[j,j+1]$, \cref{eq:restart-energy} now implies
\begin{align*}
\mathbb E\int_j^{j+1}e^{\rho t}\bigl(|y(t)|^2+|u(t)|^2\bigr)\,dt
\leq C_\rho e^{\rho(j+1)}\mathbb E|y(j)|^2
\leq C_\rho e^\rho(e^\rho\delta)^j|y_0|^2.
\end{align*}
Since $e^\rho\delta=1/2$, summing over $j\geq0$ gives
\begin{align}\label{eq:weighted-cost}
\mathbb E\int_0^\infty e^{\rho t}\bigl(|y(t)|^2+|u(t)|^2\bigr)\,dt
\leq C_\rho e^\rho\sum_{j=0}^\infty2^{-j}|y_0|^2
=2C_\rho e^\rho|y_0|^2.
\end{align}

\emph{Step 3: from finite weighted cost to the matrix inequality.}
Set $X(t)=e^{\rho t/2}y(t)$ and $v(t)=e^{\rho t/2}u(t)$. The
transformed system has drift matrix $A_\rho=A+\rho I_n/2$, and
\cref{eq:weighted-cost} bounds its unweighted infinite-horizon cost by
$2C_\rho e^\rho|y_0|^2$. We use finite-horizon value matrices to obtain a
stabilizing feedback, following the weighted LQ approach of
\cite{LiuPeng2002}.

For a symmetric matrix $H$, define
\begin{align*}
\mathsf R(H)=I_r+\sum_{j=1}^mD_j^\top HD_j
\end{align*}
and the state--control coefficient
\begin{align*}
\mathsf L(H)=B^\top H+\sum_{j=1}^mD_j^\top HC_j.
\end{align*}
Put
\begin{align*}
F_\rho(H)=A_\rho^\top H+HA_\rho+\sum_{j=1}^mC_j^\top HC_j+I_n
-\mathsf L(H)^\top\mathsf R(H)^{-1}\mathsf L(H).
\end{align*}
The map $F_\rho$ is defined wherever $\mathsf R(H)>0$. Consider
the Riccati equation in the horizon length $s$,
\begin{align}\label{eq:finite-riccati}
\begin{cases}
\dot{\mathsf H}_\rho(s)=F_\rho\bigl(\mathsf H_\rho(s)\bigr),\\[4pt]
\mathsf H_\rho(0)=0.
\end{cases}
\end{align}
With drift $A_\rho$, state and control weights $Q=I_n$ and $R=I_r$,
zero cross weight, and zero terminal weight, the standard stochastic
LQ theorem \cite[Chapter~6, Theorem~7.2, p.~320]{Yong1999}
applies. Its multidimensional version is given by equation~(7.60) and the
discussion on pp.~334--335 therein. The constant coefficients and
weights satisfy its boundedness, continuity and positivity assumptions.
After time reversal, it gives a unique nonnegative solution of
\cref{eq:finite-riccati} on every bounded interval. By uniqueness,
these solutions agree on overlapping intervals and define a global
solution satisfying
$\mathsf R\bigl(\mathsf H_\rho(s)\bigr)\geq I_r$ for $s\geq0$.
Moreover, $\dot{\mathsf H}_\rho(0)=I_n$.
Fix $s>0$ and set $H_t=\mathsf H_\rho(s-t)$.
For $X(0)=x$, It\^o's formula and completion of squares give
\begin{align*}
\mathbb E\int_0^s\bigl(|X(t)|^2+|v(t)|^2\bigr)\,dt
=x^\top\mathsf H_\rho(s)x+\mathbb E\int_0^s\bigl|
\mathsf R(H_t)^{1/2}v+
\mathsf R(H_t)^{-1/2}\mathsf L(H_t)X\bigr|^2\,dt.
\end{align*}
The last integral is nonnegative and vanishes for the feedback displayed
below. Consequently,
\begin{align*}
x^\top\mathsf H_\rho(s)x
=\inf_v\mathbb E\int_0^s\bigl(|X(t)|^2+|v(t)|^2\bigr)\,dt,
\end{align*}
where $X(0)=x$ and the infimum is over square-integrable progressively measurable
controls on $[0,s]$. It is attained by
\begin{align*}
v(t)=-\mathsf R\bigl(\mathsf H_\rho(s-t)\bigr)^{-1}
\mathsf L\bigl(\mathsf H_\rho(s-t)\bigr)X(t).
\end{align*}
Nonnegative running cost makes
$\mathsf H_\rho(s)$ nondecreasing in the Loewner order. Restricting the
control from \cref{eq:weighted-cost} to $[0,s]$ gives
\begin{align*}
0\leq \mathsf H_\rho(s)\leq 2C_\rho e^\rho I_n.
\end{align*}
Consequently, $\mathsf H_\rho(s)$ converges to a symmetric matrix
$H_\rho$. For fixed $h>0$, the integral form of
\cref{eq:finite-riccati} gives
\begin{align*}
\mathsf H_\rho(s+h)-\mathsf H_\rho(s)
=\int_0^hF_\rho\bigl(\mathsf H_\rho(s+\tau)\bigr)\,d\tau.
\end{align*}
Letting $s\to\infty$ and using continuity of $F_\rho$ gives
$F_\rho(H_\rho)=0$. Since $\dot{\mathsf H}_\rho(0)=I_n$ and the value
matrices are nondecreasing, $H_\rho>0$.

Set $K_\rho=-\mathsf R(H_\rho)^{-1}\mathsf L(H_\rho)$. The identity
$F_\rho(H_\rho)=0$ becomes
\begin{align}\label{eq:lq-lyapunov}
(A+BK_\rho)^\top H_\rho+H_\rho(A+BK_\rho)+\rho H_\rho
+\sum_{j=1}^m(C_j+D_jK_\rho)^\top H_\rho(C_j+D_jK_\rho)
=-I_n-K_\rho^\top K_\rho<0.
\end{align}
Take $P=H_\rho^{-1}$ and $U=K_\rho P$, so that
$(C_j+D_jK_\rho)P=C_jP+D_jU$. Multiplying
\cref{eq:lq-lyapunov} on both sides by $P$ gives
\begin{align*}
AP+PA^\top+BU+U^\top B^\top+\rho P
+(\mathcal CP+\mathcal DU)^\top
(I_m\otimes P^{-1})(\mathcal CP+\mathcal DU)<0.
\end{align*}
Since $P>0$, the Schur complement with lower-right block
$-I_m\otimes P$ yields $\mathcal M_\rho(P,U)<0$.
\end{proof}

\subsection{Polynomially bounded stabilizing feedbacks}
\label{sec:feedback-growth}

We now seek feasible pairs for which the feedback gain and the condition
number $\kappa(P)=\lambda_{\max}(P)/\lambda_{\min}(P)$ grow at most
polynomially with the decay rate.

For this purpose, we use semialgebraic sets, defined by finite Boolean
combinations of polynomial equalities and inequalities with real
coefficients. A function is semialgebraic if its graph is semialgebraic.
The Tarski--Seidenberg theorem permits projection and elimination of
real quantifiers \cite[Theorem~2.2.1 and Proposition~2.2.4]{BCR}.

\begin{lemma}\label{lem:tarski-seidenberg}
The image of a semialgebraic subset of $\mathbb R^{a+b}$ under
coordinate projection onto $\mathbb R^a$ is semialgebraic. More generally,
a set defined by polynomial equalities and inequalities, finite Boolean
operations and finitely many real existential or universal quantifiers
is semialgebraic.
\end{lemma}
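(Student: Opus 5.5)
This lemma is the Tarski--Seidenberg theorem, as recorded in \cite[Theorem~2.2.1 and Proposition~2.2.4]{BCR}; my plan is to reduce both statements to the projection theorem by the standard induction on the structure of formulas.

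First I will treat the projection statement. A coordinate projection $\mathbb R^{a+b}\to\mathbb R^a$ factors as a composition of $b$ projections that each drop one coordinate, so it suffices to handle $b=1$. Every semialgebraic set is a finite union of basic sets $\{p=0,\ q_1>0,\dots,q_s>0\}$, and projection commutes with unions, so it suffices to project one basic set. For fixed $x\in\mathbb R^a$, whether some $t$ satisfies $p(x,t)=0$ and $q_i(x,t)>0$ depends only on the sign pattern of finitely many polynomials in $x$. These are the coefficients, subresultants and derived quantities of the family $p,q_i$ viewed as polynomials in $t$. This is the content of the parametrized Sturm/Thom sign-determination argument in \cite{BCR}. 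The projection is therefore a finite Boolean combination of polynomial sign conditions in $x$, hence semialgebraic.

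\textbf{This step is the main obstacle}, since it is where real algebra genuinely enters: one must produce the uniform-in-$x$ sign description. I would not reprove it; I would invoke \cite[Theorem~2.2.1]{BCR} directly.

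Second, the quantifier statement. Semialgebraic sets are closed under finite unions, intersections and complements by definition, since they are finite Boolean combinations. I will bring a formula to prenex form and eliminate quantifiers from the innermost outward. An existential block $\exists t\,\Phi(x,t)$ defines the projection of the semialgebraic set $\{\Phi\}$, which is semialgebraic by the first part. A universal quantifier is handled through $\forall t\,\Phi\iff\neg\exists t\,\neg\Phi$, which uses complements together with a projection. Induction on the number of quantifiers then gives the claim, which is \cite[Proposition~2.2.4]{BCR}.
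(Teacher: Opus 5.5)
Your proposal is correct and matches the paper, which proves nothing here and simply cites Theorem~2.2.1 (projection) and Proposition~2.2.4 (quantified formulas) of Bochnak--Coste--Roy. Your reductions are the standard route behind those results: eliminating one coordinate at a time, splitting into basic sets $\{p=0,\ q_i>0\}$, and handling universal quantifiers as $\neg\exists\neg$ via complements. You also correctly defer the genuine real-algebraic step to the cited projection theorem.
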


We also use the polynomial growth bound for continuous semialgebraic
functions on closed semialgebraic sets
\cite[Proposition~2.6.2, p.~43]{BCR}, in the following form.

\begin{lemma}\label{lem:semialgebraic-growth}
If $f:[1,\infty)\to\mathbb R$ is continuous and semialgebraic, then
there exist $M\geq1$ and an integer $p\geq1$ such that
\begin{align*}
|f(\rho)|\leq M\rho^p,\qquad \rho\geq1.
\end{align*}
\end{lemma}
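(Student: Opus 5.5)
The statement to prove is \cref{lem:semialgebraic-growth}: a continuous semialgebraic function $f:[1,\infty)\to\mathbb R$ grows at most polynomially. The paper cites it from \cite[Proposition~2.6.2]{BCR}. A self-contained argument reduces to the one-variable structure of semialgebraic functions.

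\emph{Step 1: reduce to an algebraic relation for $f$.} The graph $G=\{(\rho,f(\rho)):\rho\geq1\}$ is semialgebraic in $\mathbb R^2$, so it is a finite union of sets of the form $\{P_i=0,\ Q_{i1}>0,\dots\}$. Projecting, there is a nonzero polynomial $P(\rho,t)$ with $P(\rho,f(\rho))=0$ for all $\rho\geq1$ outside a finite set. To see this, note first that a semialgebraic subset of $\mathbb R^2$ with empty interior lies in the zero set of a nonzero polynomial. A graph of a function has empty interior, so the pieces described only by strict inequalities are empty, and we may take $P=\prod_i P_i$ over the pieces that contain an equation. Write $P(\rho,t)=\sum_{k=0}^d p_k(\rho)t^k$ with $p_d\not\equiv0$. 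Remove from $P$ any factor depending only on $\rho$; such a factor vanishes only at finitely many $\rho$, and $f$ is continuous, so the relation still holds off a finite set. After this, $P$ has positive degree in $t$.

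\emph{Step 2: the Cauchy root bound.} For $\rho$ large, $p_d(\rho)\neq0$. Every root $t$ of $P(\rho,\cdot)$ then satisfies
\begin{align*}
|t|\leq 1+\max_{k<d}\frac{|p_k(\rho)|}{|p_d(\rho)|}.
\end{align*}
Since $|p_d(\rho)|\geq c\rho^{e}\geq c$ for $\rho\geq\rho_0$ and each $|p_k(\rho)|\leq c'\rho^{q}$, we get $|f(\rho)|\leq M_1\rho^{q}$ for $\rho\geq\rho_0$.

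\emph{Step 3: the compact part and the integer exponent.} On $[1,\rho_0]$, $f$ is continuous and therefore bounded by some $M_2$. Take $p=\max(q,1)$ and $M=\max(M_1,M_2,1)$; since $\rho^p\geq1$ on $[1,\infty)$, this gives $|f(\rho)|\leq M\rho^p$ for all $\rho\geq1$.

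The main obstacle is Step 1, namely making rigorous that the graph is contained in a proper algebraic curve. I would handle it with the cylindrical decomposition, or the cell structure of one-dimensional semialgebraic sets \cite{BCR}. Alternatively one can cite \cite[Proposition~2.6.2]{BCR} directly: it states that a continuous semialgebraic function on a closed semialgebraic set is bounded by $M(1+|x|^2)^p$. Specializing to $[1,\infty)$ and absorbing the constant, using $1+\rho^2\leq2\rho^2$, gives the claim.
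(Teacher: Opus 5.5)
Your proposal is correct. The paper gives no argument of its own. It simply invokes the polynomial growth bound of \cite[Proposition~2.6.2]{BCR} for continuous semialgebraic functions on closed semialgebraic sets and specializes it to $[1,\infty)$, which is exactly your closing alternative.

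Your main route is different: a self-contained one-variable proof. The graph lies on a proper algebraic curve $P(\rho,t)=0$, Cauchy's root bound gives polynomial growth for large $\rho$, and continuity is used only to bound $f$ on the compact piece $[1,\rho_0]$. The citation buys generality, namely arbitrary closed semialgebraic domains in $\mathbb R^n$. The paper does not need this, since $\mu$ depends on the single variable $\rho$. Your route buys transparency: it needs only the finite normal form of semialgebraic sets and an elementary root bound. It also shows that continuity matters only on the bounded part; a discontinuous semialgebraic function such as $1/(\rho-2)$, with any value at $\rho=2$, shows it cannot be dropped there.

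Two small points. First, the step you flag as the main obstacle is already closed by your own argument, with no need for cylindrical decomposition. Each basic piece with $P_i\equiv0$ is open and contained in a set with empty interior, hence empty, so the graph lies in $\{\prod_iP_i=0\}$. Second, removing factors that depend only on $\rho$ is unnecessary. If $\deg_tP=0$, then $P=p_0(\rho)$ would vanish on all of $[1,\infty)$, forcing $P\equiv0$, so $P$ automatically has positive degree in $t$ and $p_d(\rho)\neq0$ for large $\rho$.
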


Related real-algebraic methods have been used in output-feedback and
stabilization problems \cite{AndersonBoseJury1975,FardigolaSheveleva2002}.
We apply these lemmas to the minimum of
$\lVert P\rVert_F^2+\lVert U\rVert_F^2$ over a normalized feasible set
for \cref{eq:lmi}. The normalization bounds $P^{-1}$, while the minimum
bounds both $\kappa(P)$ and $\lVert UP^{-1}\rVert^2$.

\begin{lemma}\label{lem:growth}
Suppose that a feasible pair exists for every $\rho\geq1$. There exist a
constant $M\geq1$ and an integer $p\geq1$, depending only on
$A,B,(C_j,D_j)_{j=1}^m$, such that for every $\rho\geq1$ one can choose
a feasible pair $\bigl(P(\rho),U(\rho)\bigr)$ satisfying
\begin{align}\label{eq:growth}
\kappa\bigl(P(\rho)\bigr)\leq M\rho^p
\end{align}
and
\begin{align}\label{eq:gain-growth}
\bigl\lVert U(\rho)P(\rho)^{-1}\bigr\rVert^2\leq M\rho^p.
\end{align}
\end{lemma}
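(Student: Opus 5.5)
We follow the strategy announced just before the statement. Fix the normalization $P\geq I_n$, so that $\lVert P^{-1}\rVert\leq1$. For $\rho\geq1$ consider the set
\begin{align*}
\mathcal S_\rho=\bigl\{(P,U):P=P^\top,\ P\geq I_n,\ \mathcal M_\rho(P,U)<0\bigr\}.
\end{align*}
This set is nonempty. Given a feasible pair at rate $\rho$, its $P$ is positive definite, and $\mathcal M_\rho$ is homogeneous of degree one in $(P,U)$. Scaling the pair by $t=\lambda_{\min}(P)^{-1}$ therefore gives a pair that is still feasible and now has $P\geq I_n$.

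Define
\begin{align*}
g(\rho)=\inf\bigl\{\lVert P\rVert_F^2+\lVert U\rVert_F^2:(P,U)\in\mathcal S_\rho\bigr\}.
\end{align*}
By \cref{lem:tarski-seidenberg}, the graph of $g$ is semialgebraic. The conditions "$\mathcal M_\rho(P,U)<0$" and "$P\geq I_n$" are first-order formulas, for instance positivity of $-x^\top\mathcal M_\rho x$ for all $x\neq0$. The infimum is then expressed with one universal and one existential quantifier over $(P,U)$.

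We do not need $g$ to be continuous. Instead we work with a continuous semialgebraic majorant. One option is to note that $g$ is nondecreasing. Indeed, $\mathcal M_{\rho'}(P,U)\leq\mathcal M_\rho(P,U)$ for $\rho'\leq\rho$, since the only $\rho$-dependence is the term $\rho P$ with $P>0$; hence $\mathcal S_\rho\subset\mathcal S_{\rho'}$. A semialgebraic function on $[1,\infty)$ is continuous off a finite set, so the function $\tilde g(\rho)=g(\rho+1)$, with linear interpolation near the finitely many discontinuities, gives a continuous semialgebraic function $\tilde g\geq g$. The simplest route is actually to apply the growth result of \cite{BCR} directly, since a semialgebraic function that is locally bounded on $[1,\infty)$ has polynomial growth at infinity. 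Here $g$ is finite everywhere and monotone, hence locally bounded. Either way, \cref{lem:semialgebraic-growth} yields $g(\rho)\leq M_0\rho^{p}$.

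Now choose $(P(\rho),U(\rho))\in\mathcal S_\rho$ with $\lVert P\rVert_F^2+\lVert U\rVert_F^2\leq 2g(\rho)+1$; this is possible since $g(\rho)<\infty$. The two bounds then follow:
\begin{align*}
\kappa(P)&\leq\frac{\lVert P\rVert}{1}\leq\bigl(2M_0\rho^p+1\bigr)^{1/2},\\
\lVert UP^{-1}\rVert^2&\leq\lVert U\rVert^2\,\lVert P^{-1}\rVert^2\leq 2M_0\rho^p+1.
\end{align*}
Both are bounded by $M\rho^p$ for a suitable $M$ once $\rho\geq1$. The constants depend only on the coefficient matrices, because the semialgebraic description of $g$ involves only those matrices.

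The main obstacle is the regularity step. \Cref{lem:semialgebraic-growth} requires continuity, and $g$ is merely monotone and semialgebraic. We will handle this by the monotonicity-plus-finite-discontinuity argument above; if needed we will replace $g$ by $\rho\mapsto g(\rho+1)$, which dominates $g$. A second point to check is that the quantified description of the strict matrix inequality and of the infimum is genuinely first order. This holds because every quantifier ranges over real variables: the entries of $P$ and $U$, and the test vector $x$.
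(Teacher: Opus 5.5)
Your proof is correct, and its overall strategy is the paper's. Both take the optimal value of $\lVert P\rVert_F^2+\lVert U\rVert_F^2$ over feasible pairs normalized by $P\geq I_n$, show it is semialgebraic by Tarski--Seidenberg, and bound it polynomially in $\rho$. Both then obtain \cref{eq:growth,eq:gain-growth} from $\kappa(P)\leq\lVert P\rVert$ and $\lVert UP^{-1}\rVert\leq\lVert U\rVert$.

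The two routes differ only at the regularity step you flagged. The paper works with the closed set $\mathcal M_\rho(P,U)\leq-I_{(m+1)n}$. This set builds in $P\geq I_n$, makes the minimum attained, and leaves a uniform margin. The paper then proves continuity of the minimum: lower semicontinuity by compactness, and upper semicontinuity by rescaling to $(cP,cU)$ with $c\downarrow1$. This lets the continuous-function bound in \cref{lem:semialgebraic-growth} apply verbatim.

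You keep the open set with an explicit normalization and use near-minimizers. You replace the continuity proof by the one-line observation that the value is nondecreasing in $\rho$. This is shorter, and the same monotonicity holds for the paper's $\mu$.

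The cost is that your ``simplest route'' needs a different growth statement from the one recorded in \cref{lem:semialgebraic-growth}, which assumes continuity. You need that every finite-valued semialgebraic function of one variable is bounded by $\rho^p$ for large $\rho$. This follows, for instance, from a nonzero polynomial $Q(\rho,s)$ vanishing on the graph together with Cauchy's root bound. Monotonicity then covers the remaining compact interval. So ``Either way, \cref{lem:semialgebraic-growth} yields'' is not literally accurate.

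Your majorant route also works, but not quite as written. Shifting to $g(\rho+1)$ does not remove discontinuities. What keeps a linear interpolation above $g$ is the unit slack together with monotonicity, with each jump bridged over an interval of length at most $1$.

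Finally, the first-order description of the infimum needs one more real quantifier, over $\varepsilon>0$, in the approximation clause. This is harmless.
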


\begin{proof}
\emph{Step 1: a semialgebraic minimum.}
For $\rho\geq1$, define the closed set
\begin{align*}
\mathscr F_\rho=
\bigl\{(P,U):P=P^{\top},\ \mathcal M_\rho(P,U)\leq -I_{(m+1)n}\bigr\}
\end{align*}
and put $g(P,U)=\lVert P\rVert_F^2+\lVert U\rVert_F^2$. By the
feasibility assumption and the homogeneity
\begin{align*}
\mathcal M_\rho(cP,cU)=c\mathcal M_\rho(P,U),\qquad c>0,
\end{align*}
$\mathscr F_\rho$ is nonempty. Its lower-right block gives
$I_m\otimes P\geq  I_{mn}$, hence $P\geq  I_n$. Since $g$ is
coercive, it attains the finite minimum
\begin{align*}
\mu(\rho)=\min_{(P,U)\in\mathscr F_\rho}g(P,U).
\end{align*}

We first show that $\mu$ is semialgebraic. The set of triples
$(\rho,P,U)$ with $\rho\geq1$ and $(P,U)\in\mathscr F_\rho$ is
semialgebraic: the matrix inequality is equivalent to nonnegativity of all
principal minors of $-\mathcal M_\rho(P,U)-I_{(m+1)n}$. For
$\rho\geq1$, a point $(\rho,s)$ belongs to the graph of $\mu$
precisely when some $(P,U)\in\mathscr F_\rho$ satisfies $g(P,U)=s$ and
no $(P',U')\in\mathscr F_\rho$ satisfies $g(P',U')<s$.
By \cref{lem:tarski-seidenberg}, this graph is semialgebraic.

\smallskip

\emph{Step 2: continuity of the minimum.}
Let $\rho_j\to\rho$ in $[1,\infty)$. To prove lower semicontinuity,
it suffices to consider $\liminf_j\mu(\rho_j)<\infty$. Pass to a
subsequence realizing this lower limit and choose minimizing pairs
$(P_j,U_j)\in\mathscr F_{\rho_j}$. Coercivity gives a further
subsequence converging to a pair $(P,U)$. Passing to the limit in the
matrix inequality gives $(P,U)\in\mathscr F_\rho$, and hence
\begin{align*}
\mu(\rho)\leq g(P,U)=\liminf_{j\to\infty}\mu(\rho_j).
\end{align*}
For upper semicontinuity, choose a minimizing pair $(P,U)$ at $\rho$
and fix $c>1$. For $\rho'$ near $\rho$,
\begin{align*}
\mathcal M_{\rho'}(cP,cU)
=c\mathcal M_\rho(P,U)
+c(\rho'-\rho)\begin{pmatrix}P&0\\0&0_{mn}\end{pmatrix}
\leq -cI_{(m+1)n}
+c(\rho'-\rho)\begin{pmatrix}P&0\\0&0_{mn}\end{pmatrix}.
\end{align*}
Thus $(cP,cU)\in\mathscr F_{\rho'}$ whenever $\rho'\geq1$ is
sufficiently close to $\rho$, so
\begin{align*}
\limsup_{\rho'\to\rho}\mu(\rho')\leq c^2\mu(\rho).
\end{align*}
Letting $c\downarrow1$ proves upper semicontinuity.

\smallskip

\emph{Step 3: polynomial bounds.}
Applying \cref{lem:semialgebraic-growth} to the continuous
semialgebraic function $\mu:[1,\infty)\to\mathbb R$, we obtain
a constant $M\geq1$ and an integer $p\geq1$ such that
\begin{align*}
\mu(\rho)\leq M\rho^p,\qquad \rho\geq1.
\end{align*}
For any minimizing pair $(\bar P,\bar U)$, we have $\bar P\geq  I_n$
and $\mu(\rho)\geq n\geq1$. In particular,
$\lVert\bar P^{-1}\rVert\leq1$, so
\begin{align*}
\kappa(\bar P)\leq\lVert\bar P\rVert_F
\leq\sqrt{\mu(\rho)}\leq\mu(\rho).
\end{align*}
The gain satisfies
\begin{align*}
\bigl\lVert\bar U\bar P^{-1}\bigr\rVert^2
\leq\lVert\bar U\rVert_F^2\leq\mu(\rho).
\end{align*}
Taking $P(\rho)=\bar P$ and $U(\rho)=\bar U$ proves
\cref{eq:growth,eq:gain-growth}.
\end{proof}

We next record the decay estimate that will be used on each switching
interval. For a feasible pair $(P,U)$ at rate $\rho$, set
$K=UP^{-1}$. The Schur complement of \cref{eq:lmi}, followed by
multiplication on both sides by $P^{-1}$, gives
\begin{align}\label{eq:lyapunov}
(A+BK)^{\top}P^{-1}+P^{-1}(A+BK)+\rho P^{-1}
+\sum_{j=1}^m(C_j+D_jK)^{\top}P^{-1}(C_j+D_jK)<0.
\end{align}
Define $V(x)=\langle P^{-1}x,x\rangle$ and apply the feedback $u=Ky$.
The initial state $y(t)$ may be any square-integrable
$\mathcal F_t$-measurable vector. It\^o's formula gives
\begin{align*}
dV(y(\tau))
&=2\langle P^{-1}y(\tau),(A+BK)y(\tau)\rangle\,d\tau
+\sum_{j=1}^m\bigl|P^{-1/2}(C_j+D_jK)y(\tau)\bigr|^2\,d\tau\\
&\quad+2\sum_{j=1}^m\langle P^{-1}y(\tau),
(C_j+D_jK)y(\tau)\rangle\,dW^j(\tau).
\end{align*}
By \cref{eq:lyapunov}, the drift is at most $-\rho V(y(\tau))$.
Multiplying by $e^{\rho(\tau-t)}$, localizing the stochastic integral,
and taking expectations yields
\begin{align*}
\mathbb EV(y(t+s))\leq e^{-\rho s}\mathbb EV(y(t)).
\end{align*}
Finally, the eigenvalue bounds
\begin{align*}
\frac{|x|^2}{\lambda_{\max}(P)}
\leq V(x)\leq\frac{|x|^2}{\lambda_{\min}(P)}
\end{align*}
give
\begin{align}\label{eq:decay}
\mathbb E|y(t+s)|^2\leq\kappa(P)e^{-\rho s}\mathbb E|y(t)|^2.
\end{align}
Both $K$ and $P$ are deterministic, so \cref{eq:decay} applies without
changing the feedback when $y(t)$ is random.

\subsection{Feedback construction and exact controllability}
\label{sec:switching}

\begin{proof}[Proof of \cref{thm:null-feedback}]
Assume $\mathcal V_*=\{0\}$. We first consider $0<T\leq1$ and treat
$T>1$ at the end of the construction. By
\cref{lem:arbitrary-rate,lem:growth}, we may choose feedbacks satisfying
\cref{eq:growth,eq:gain-growth} at every rate $\rho\geq1$. We will
shorten the switching intervals while increasing the decay rates fast
enough that the factor $e^{-\rho s}$ in \cref{eq:decay} compensates for
both polynomial factors in \cref{eq:growth,eq:gain-growth}.

\emph{Step 1: choosing the intervals and decay rates.}
Choose $L\geq\max(1,4p)$ so large that
\begin{align*}
4^{2p}ML^pe^{-L/2}\leq\frac12,
\end{align*}
which is possible because $e^{-L/2}$ decays faster than $L^p$. Set
\begin{align}\label{eq:switch}
t_j=T(1-2^{-j}),\qquad j\geq0.
\end{align}
The interval lengths are $\ell_j=t_{j+1}-t_j=T2^{-j-1}$. Choose
$\lambda_j=LT^{-2}4^j$ and
$K_j=U(\lambda_j)P(\lambda_j)^{-1}$, and define
\begin{align*}
K_T(t)=K_j,\qquad t\in[t_j,t_{j+1}),\quad j\geq0.
\end{align*}
In particular, $K_T(t_j)=K_j$ at every switching time.
On each interval, we solve the linear SDE with drift matrix
$A+BK_j$ and diffusion matrices $C_i+D_iK_j$, $i=1,\dots,m$,
using the terminal state from the preceding interval as its
initial state, starting with $y(0)=y_0$.
By \cite[Theorem~3.2, p.~132]{Lue2021a}, these equations have
unique square-integrable continuous adapted solutions.
They therefore join continuously at the switching times.
Since $K_T$ is bounded on every $[0,s]$ with $s<T$, this gives
a unique continuous adapted state on $[0,T)$, with
$u(t)=K_T(t)y(t)$ locally square-integrable.
Although
$\ell_j\to0$, the products $\lambda_j\ell_j=L2^{j-1}/T$ tend to
infinity. Writing $a_j=\mathbb E|y(t_j)|^2$,
\cref{eq:decay,eq:growth} give
\begin{align}\label{eq:state-step}
a_{j+1}\leq M\lambda_j^pe^{-\lambda_j\ell_j}a_j.
\end{align}

\emph{Step 2: summability of the control energy.}
The energy on $[t_j,t_{j+1}]$ contains the product of
$\lVert K_j\rVert^2$ and $\kappa\bigl(P(\lambda_j)\bigr)$. Indeed,
\cref{eq:decay,eq:growth,eq:gain-growth} imply
\begin{align}\label{eq:interval-energy}
\mathbb E\int_{t_j}^{t_{j+1}}|u(t)|^2\,dt
\leq\lVert K_j\rVert^2\kappa\bigl(P(\lambda_j)\bigr)a_j
\int_0^{\ell_j}e^{-\lambda_js}\,ds
\leq M^2\lambda_j^{2p}a_j\int_0^{\ell_j}e^{-\lambda_js}\,ds
\leq M^2\lambda_j^{2p-1}a_j.
\end{align}
This motivates the definition $b_j=\lambda_j^{2p}a_j$, for which
\cref{eq:interval-energy} is bounded by $M^2b_j/\lambda_j$. Since
$\lambda_{j+1}=4\lambda_j$, \cref{eq:state-step} gives
\begin{align}\label{eq:aj}
b_{j+1}
=\lambda_{j+1}^{2p}a_{j+1}
\leq 4^{2p}M\lambda_j^pe^{-\lambda_j\ell_j}b_j
\leq\frac12b_j.
\end{align}
To verify the last inequality, put $x_j=2^j/T\geq1$. Then
\begin{align*}
4^{2p}M\lambda_j^pe^{-\lambda_j\ell_j}
=4^{2p}ML^px_j^{2p}e^{-Lx_j/2}
\leq4^{2p}ML^pe^{-L/2}\leq\frac12.
\end{align*}
Here $x\mapsto x^{2p}e^{-Lx/2}$ is nonincreasing on $[1,\infty)$
because $L\geq4p$. It follows from \cref{eq:aj} that
$b_j\leq2^{-j}b_0$.  This, together with $\lambda_j=4^j\lambda_0$,
gives
\begin{align*}
\frac{b_j}{\lambda_j}\leq8^{-j}\frac{b_0}{\lambda_0}.
\end{align*}
Summing \cref{eq:interval-energy}, we therefore obtain
\begin{align}\label{eq:switching-energy}
\mathbb E\int_0^T|u(t)|^2\,dt
\leq M^2\sum_{j=0}^\infty\frac{b_j}{\lambda_j}
\leq\frac{M^2b_0}{\lambda_0}\sum_{j=0}^\infty8^{-j}
=\frac{8M^2b_0}{7\lambda_0}
=\frac87M^2L^{2p-1}T^{-(4p-2)}|y_0|^2.
\end{align}
In the last equality we used $b_0=\lambda_0^{2p}|y_0|^2$ and
$\lambda_0=LT^{-2}$.

\emph{Step 3: extending the state to the terminal time.}
So far, $(y,u)$ has been constructed on $[0,T)$ by solving the
feedback equation on successive intervals. Set $u(T)=0$; this
single endpoint value does not affect the integrals.
The resulting control is progressively measurable, and
\cref{eq:switching-energy} gives $u\in\mathcal U_T$.
Regarding $u$ as a fixed admissible control, we apply
\cite[Theorem~3.2, p.~132]{Lue2021a} to \cref{eq:system} on $[0,T]$.
The resulting solution belongs to
$L^2_{\mathbb F}\bigl(\Omega;C([0,T];\mathbb R^n)\bigr)$ and, by uniqueness,
agrees with the successively constructed state on every $[0,t_j]$.
We retain the notation $y$ for this extension.
Any other continuous adapted closed-loop solution agrees with $y$
on every $[0,t_j]$ by local uniqueness, and hence also at $T$
by continuity. Since $\lambda_j\geq1$, \cref{eq:aj} implies
\begin{align*}
a_j=\lambda_j^{-2p}b_j\leq2^{-j}b_0\longrightarrow0.
\end{align*}
In particular, $y$ is continuous in $L^2(\Omega;\mathbb R^n)$
at $T$. Consequently,
\begin{align*}
\mathbb E|y(T)|^2=\lim_{j\to\infty}\mathbb E|y(t_j)|^2
=\lim_{j\to\infty}a_j=0.
\end{align*}
The deterministic gain schedule is independent of $y_0$, so the
resulting null-control map is linear and bounded. Together with
\cref{eq:switching-energy}, the identity $y(T)=0$ proves the theorem for
$0<T\leq1$, with $N=4p-2$ in \cref{eq:cost}.

For $T>1$, let $\widehat K$ be the feedback schedule constructed above
for the unit horizon. Set $K_T(t)=0$ on $[0,T-1)$ and
$K_T(t)=\widehat K(t-T+1)$ on $[T-1,T)$. The uncontrolled state
$y(T-1)$ is square-integrable. Since \cref{eq:decay} applies to
square-integrable $\mathcal F_{T-1}$-measurable initial states, the
preceding estimates apply on the last unit interval with $|y_0|^2$
replaced by $\mathbb E|y(T-1)|^2$. They give finite control energy and
$y(T)=0$. This schedule is locally bounded on $[0,T)$ and independent
of $y_0$, completing the proof.
\end{proof}

\begin{proof}[Proof of \cref{thm:geometry}, part \textup{(i)}]
By \cref{lem:approximate}, AC and ANC are each equivalent to
$\mathcal V_*=\{0\}$. By \cref{thm:null-feedback}, this condition implies
ENC. Conversely, ENC implies ANC, completing the equivalence.
\end{proof}

\begin{proof}[Proof of \cref{thm:geometry}, part \textup{(ii)}]
Fix $T>0$. Exact controllability implies approximate null controllability
and hence $\mathcal V_*=\{0\}$. It also makes the map
$M_T:\mathcal U_T\to L^2_{\mathcal F_T}(\Omega;\mathbb R^n)$,
$M_Tu=y(T;0,u)$, surjective. Hilbert-space duality therefore gives a
constant $c(T)>0$ such that every adjoint solution satisfies
\begin{align}\label{eq:exact-observation}
\mathbb E|\eta|^2\leq c(T)\mathbb E\int_0^T
\Bigl|B^\top z(t)+\sum_{j=1}^mD_j^\top Z_j(t)\Bigr|^2\,dt.
\end{align}
We show that this requires $\operatorname{rank}\mathcal D=mn$.
Otherwise, choose
$q=[q_1^\top\ \cdots\ q_m^\top]^\top\in\ker\mathcal D^\top$ with
$|q|^2=\sum\limits_{j=1}^m|q_j|^2=1$. For $0<\varepsilon<\min(T,1)$,
prescribe
\begin{align*}
Z_j^\varepsilon(t)=\varepsilon^{-1/2}q_j
\mathbf1_{[T-\varepsilon,T]}(t),\qquad j=1,\dots,m,
\end{align*}
solve the adjoint differential equation forward from
$z^\varepsilon(0)=0$, and put $\eta_\varepsilon=z^\varepsilon(T)$.
By uniqueness, this is the adjoint solution for terminal value
$\eta_\varepsilon$. Set $g=\sum\limits_{j=1}^mC_j^\top q_j$. Then
$z^\varepsilon(t)=0$ for $t\leq T-\varepsilon$, and for later times
\begin{align*}
z^\varepsilon(t)
=-\varepsilon^{-1/2}\int_{T-\varepsilon}^t
e^{-A^\top(t-s)}g\,ds+\varepsilon^{-1/2}\sum_{j=1}^m\int_{T-\varepsilon}^t
e^{-A^\top(t-s)}q_j\,dW^j(s).
\end{align*}
The It\^o isometry gives
\begin{align*}
\mathbb E|\eta_\varepsilon|^2
=\frac1\varepsilon\sum_{j=1}^m\int_0^\varepsilon
|e^{-A^\top s}q_j|^2\,ds+\frac1\varepsilon\Bigl|\int_0^\varepsilon
e^{-A^\top s}g\,ds\Bigr|^2
\longrightarrow1.
\end{align*}
On the other hand, $\sum\limits_{j=1}^mD_j^\top Z_j^\varepsilon=0$.
Let $M_A=\max\limits_{0\leq s\leq1}\lVert e^{-A^\top s}\rVert$ and
$c_*=M_A^2(1+\sum\limits_{j=1}^m\lVert C_j\rVert^2)$.
For $\tau=t-(T-\varepsilon)\in[0,\varepsilon]$, the deterministic
term in the representation of $z^\varepsilon(t)$ has zero cross
expectation with the stochastic integral. The It\^o isometry
and the Cauchy--Schwarz inequality therefore give
\begin{align*}
\mathbb E|z^\varepsilon(t)|^2
=\frac1\varepsilon
\Bigl|\int_0^\tau e^{-A^\top s}g\,ds\Bigr|^2
+\frac1\varepsilon\sum_{j=1}^m
\int_0^\tau|e^{-A^\top s}q_j|^2\,ds
\leq\frac{M_A^2}{\varepsilon}
\Bigl(\tau^2|g|^2+\tau\sum_{j=1}^m|q_j|^2\Bigr)
\leq\frac{c_*}{\varepsilon}(\tau^2+\tau)\leq2c_*.
\end{align*}
Here we used $|g|^2\leq\sum\limits_{j=1}^m\lVert C_j\rVert^2$,
$\sum\limits_{j=1}^m|q_j|^2=1$, and $0\leq\tau\leq\varepsilon<1$.
In particular, $c_*$ is independent of $\varepsilon$.
Consequently,
\begin{align*}
\lim_{\varepsilon\to 0}\mathbb E\int_0^T
\Bigl|B^\top z^\varepsilon(t)
+\sum_{j=1}^mD_j^\top Z_j^\varepsilon(t)\Bigr|^2\,dt
=\lim_{\varepsilon\to 0}\mathbb E\int_{T-\varepsilon}^T
|B^\top z^\varepsilon(t)|^2\,dt
\leq\lim_{\varepsilon\to 0}2c_*\lVert B\rVert^2\varepsilon=0.
\end{align*}
Together with $\mathbb E|\eta_\varepsilon|^2\to1$, this contradicts
\cref{eq:exact-observation}.

Conversely, suppose that $\mathcal V_*=\{0\}$ and $\mathcal D$ is onto.
Set $J=\mathcal D^\top(\mathcal D\mathcal D^\top)^{-1}$. For
$\xi\in L^2_{\mathcal F_T}(\Omega;\mathbb R^n)$, let
$(x,\zeta_1,\dots,\zeta_m)$ solve
\begin{align*}
\begin{cases}
\displaystyle dx(t)=\bigl((A-BJ\mathcal C)x(t)+BJ\boldsymbol\zeta(t)\bigr)\,dt
+\sum_{j=1}^m\zeta_j(t)\,dW^j(t),\\[4pt]
x(T)=\xi.
\end{cases}
\end{align*}
where $\boldsymbol\zeta=[\zeta_1^\top\ \cdots\ \zeta_m^\top]^\top$.
By \cite[Theorem~4.2]{Lue2021a}, this BSDE has a square-integrable
solution. Define $\widehat u=J(\boldsymbol\zeta-\mathcal Cx)\in\mathcal U_T$.
Since $\mathcal DJ=I_{mn}$, its diffusion coefficients satisfy
\begin{align*}
\mathcal Cx+\mathcal D\widehat u=\boldsymbol\zeta.
\end{align*}
The drift coefficient is
\begin{align*}
Ax+B\widehat u=(A-BJ\mathcal C)x+BJ\boldsymbol\zeta.
\end{align*}
Thus $(x,\widehat u)$ solves \cref{eq:system} with terminal state $\xi$.
Since $\mathcal F_0$ is trivial modulo null sets, $x(0)$ is
deterministic. \Cref{thm:null-feedback} supplies a control $u^0$ whose
trajectory $y^0$ starts from $y_0-x(0)$ and satisfies $y^0(T)=0$.
By linearity, $y=x+y^0$ is the state corresponding to
$u=\widehat u+u^0$, with initial state
\begin{align*}
y(0)=x(0)+\bigl(y_0-x(0)\bigr)=y_0.
\end{align*}
Its terminal state is $y(T)=x(T)+y^0(T)=\xi$. This proves
\cref{eq:exact-condition}, and hence part~(ii).
\end{proof}

\begin{remark}\label{rem:terminal-interpretation}
In the terminology of \cite{LiuPeng2002}, exact terminal controllability
allows a deterministic initial state $x_\xi$ to be chosen together with
the control for each target
$\xi\in L^2_{\mathcal F_T}(\Omega;\mathbb R^n)$, rather than prescribing
the initial state. The BSDE construction above produces such an
$x_\xi=x(0)$ when $\mathcal D$ is onto. Exact null controllability then
corrects the difference $y_0-x_\xi$ without changing the terminal target.
Thus, by linearity, exact controllability is equivalent to exact terminal
controllability together with exact null controllability. Here $x_\xi$
is deterministic, not a random initial state depending on future noise.
\end{remark}

\section{Proofs of \texorpdfstring{\Cref{thm:rank,prop:exact-rank}}{the rank criteria}}\label{sec:rank}

We now prove the criteria in the original coefficients stated in
\cref{thm:rank,prop:exact-rank}.

\begin{proof}[Proof of \Cref{thm:rank}]
We prove \cref{eq:rank-identity} by induction. For $k=1$, the drift
condition in \cref{eq:map} is unrestricted, so \cref{eq:pair-start} gives
the representation. At step $k+1$, the conditions
$w_j\in\mathcal V_k$ are represented by
\begin{align}\label{10.3-eq1}
\mathsf K_kw_j+\mathsf N_k\alpha_j=0,\qquad j=1,\dots,m.
\end{align}
The drift constraint is
\begin{align}\label{10.3-eq2}
\mathsf K_k\Bigl(A^{\top}v+\sum_{j=1}^mC_j^{\top}w_j\Bigr)
+\mathsf N_k\beta=0.
\end{align}
Combining \cref{10.3-eq1,10.3-eq2} with
$B^{\top}v+\sum\limits_{j=1}^mD_j^{\top}w_j=0$, we obtain
\begin{align*}
\mathsf K_{k+1}v+\mathsf N_{k+1}
\bigl[w_1^\top\ \cdots\ w_m^\top\ \alpha_1^\top\ \cdots\ \alpha_m^\top\ \beta^\top\bigr]^\top=0,
\end{align*}
with the block matrices from \cref{eq:pair}.

Conversely, the block equation imposes the zero-observation constraint and
places every $w_j$ and the drift
$A^{\top}v+\sum\limits_{j=1}^mC_j^{\top}w_j$ in $\mathcal V_k$.
These conditions also imply $v\in\mathcal V_k$. Indeed, starting with
$v\in\mathcal V_0$, if $v\in\mathcal V_\ell$ for $0\leq\ell<k$,
then $\mathcal V_k\subset\mathcal V_\ell$ and \cref{eq:map,eq:flag}
give $v\in\mathcal V_{\ell+1}$. The same constraints therefore yield
$v\in\mathcal V_{k+1}$, proving \cref{eq:rank-identity}.

For matrices $K,N$ with the same number of rows, the linear map
$v\mapsto Kv+\operatorname{im}N$ into the quotient by
$\operatorname{im}N$ has kernel
$\{v:Kv\in\operatorname{im}N\}$ and rank
$(\operatorname{rank}[N\ K]-\operatorname{rank}N)$. This proves
\cref{eq:rank-codimension}.
\end{proof}

Every block is polynomial in $A,B,(C_j,D_j)_{j=1}^m$, so the criterion
does not require computing the obstruction spaces or inverting a coefficient
matrix.

\begin{remark}\label{rem:deterministic-reduction}
When the diffusion coefficients vanish, \cref{cor:rank-control} recovers
the classical Kalman rank condition. The distinction between deterministic
and random terminal targets explains why EC does not become equivalent to
the other three properties in this case.

Indeed, if $C_j=D_j=0$ for every $j$, the recursion in \cref{eq:map}
gives
\begin{align*}
\mathcal V_k=\bigcap_{\ell=0}^{k-1}
\ker\bigl\{B^{\top}(A^{\top})^\ell\bigr\}
\qquad(k\geq1).
\end{align*}
Hence
\begin{align}\label{eq:kalman-reduction}
\delta_n=\operatorname{rank}[\,B\ AB\ \cdots\ A^{n-1}B\,].
\end{align}
Consequently, \cref{eq:rank-test} becomes the Kalman rank condition for AC, ANC and
ENC. For the classical problem with deterministic controls and terminal
targets, this condition is equivalent to all four controllability
properties. Here controls remain in $\mathcal U_T$ and terminal targets
range over $L^2_{\mathcal F_T}(\Omega;\mathbb R^n)$: adapted controls can
produce random terminal states even when the diffusion vanishes. Exact
controllability to all such targets fails because $\mathcal D=0$ violates
the necessary condition in \cref{cor:rank-control}.

The following construction explains how adapted controls approximate
arbitrary random terminal targets even when the diffusion vanishes. Assume
the Kalman condition. For
$\xi\in L^2_{\mathcal F_T}(\Omega;\mathbb R^n)$ and $0<h<T$, set
$\xi_h=\mathbb E[\xi\mid\mathcal F_{T-h}]$. The target $\xi_h$ is
already known at time $T-h$. After zero control on $[0,T-h]$, the
deterministic Gramian formula gives a control on $[T-h,T]$ reaching
$\xi_h$ exactly. This control is adapted and has finite expected energy
for each $h$, since $\xi_h$ is $\mathcal F_{T-h}$-measurable and
square-integrable. Under the Brownian natural filtration,
$\xi_h\to\xi$ in $L^2_{\mathcal F_T}(\Omega;\mathbb R^n)$ as
$h\downarrow0$, which gives AC. The corresponding control energies need
not remain bounded.
\end{remark}

\begin{remark}\label{rem:input-structure}
The first iterate is
\begin{align}\label{eq:first-step}
\mathcal V_1=(B\ker\mathcal D)^\perp.
\end{align}
If $B\ker\mathcal D=\{0\}$, the iteration is stationary from the start.
Hence each of the four controllability properties requires
$B\ker\mathcal D\neq\{0\}$. Exact controllability also requires
$\operatorname{rank}\mathcal D=mn$, so the control dimension satisfies
$r\geq mn+1$. Thus, besides enough input directions to prescribe all
diffusion channels, at least one input combination that vanishes in every
diffusion channel must act nontrivially through the drift.
\end{remark}

The following example, adapted from
\cite[Section~6.3 and Remark~6.24]{Lue2021a}, shows that arbitrarily small diffusion perturbations
can destroy AC, ANC and ENC, ruling out a full-rank characterization of
these properties by a fixed-size matrix depending continuously on the
coefficients.

\begin{example}\label{rem:nonrobustness}
For fixed $m\geq1$, take $n=2$, $r=1$ and consider
\begin{align*}
\begin{cases}
dy_1=y_2\,dt+\varepsilon y_2\,dW^1(t),\\[4pt]
dy_2=u\,dt.
\end{cases}
\end{align*}
Its drift coefficients are $B=(0,1)^\top$ and
\begin{align*}
A=\begin{pmatrix}0&1\\0&0\end{pmatrix}.
\end{align*}
The diffusion coefficients are $C_1=\varepsilon A$, $C_j=0$ for
$j\geq2$, and $D_j=0$ for every $j$. Since $B^\top v=v_2$, the
first step of \cref{eq:flag} gives
$\mathcal V_1=\operatorname{span}\{e_1\}$ for every $\varepsilon$.
At the second step, write $v=ae_1$ and $w_j=b_je_1$. The remaining
constraint is
\begin{align*}
A^\top v+\sum_{j=1}^mC_j^\top w_j=(a+\varepsilon b_1)e_2
\in\operatorname{span}\{e_1\},
\end{align*}
or equivalently $a+\varepsilon b_1=0$. If $\varepsilon=0$, this forces
$a=0$; if $\varepsilon\neq0$, every $a$ is admissible by choosing
$b_1=-a/\varepsilon$ and $b_j=0$ for $j\geq2$. Since $n=2$, it
follows that
\begin{align*}
\mathcal V_*=\mathcal V_2=
\begin{cases}
\{0\},&\varepsilon=0,\\
\operatorname{span}\{e_1\},&\varepsilon\neq0.
\end{cases}
\end{align*}

By \cref{thm:geometry}, AC, ANC and ENC hold at every $T>0$ when
$\varepsilon=0$, and all three fail at every $T>0$ when
$\varepsilon\neq0$. Exact controllability fails throughout this family
because $\mathcal D=0$.

For fixed $n,r,m$, maximal rank of a fixed-size matrix depending
continuously on the coefficient matrices is an open condition. The family
above therefore precludes such a full-rank characterization of AC, ANC or
ENC in general. For the rank-difference criterion,
\cref{eq:rank-codimension} gives $\delta_2=2$ when $\varepsilon=0$ and
$\delta_2=1$ otherwise. Thus the condition $\delta_2=n$ in
\cref{eq:rank-test} is not open along this family.
\end{example}

\begin{proof}[Proof of \Cref{prop:exact-rank}]
Put $w=[w_1^\top\ \cdots\ w_m^\top]^\top$.
Since $J\mathcal D$ is the orthogonal projector onto
$\operatorname{im}\mathcal D^\top$, we have
$(I_r-J\mathcal D)\mathcal D^\top=0$ and
$J^\top\mathcal D^\top=I_{mn}$. Multiplying
$B^\top v+\mathcal D^\top w=0$ by $I_r-J\mathcal D$ and $J^\top$
therefore gives
\begin{align*}
H^\top v=0,\qquad w=-J^\top B^\top v.
\end{align*}
Conversely, these conditions imply
\begin{align*}
B^\top v+\mathcal D^\top w
=(I_r-\mathcal D^\top J^\top)B^\top v=H^\top v=0.
\end{align*}
The second condition reads $w_j=-E_j^\top v$ for every $j$, and the
drift becomes
\begin{align*}
A^\top v+\sum_{j=1}^mC_j^\top w_j
=(A^\top-\mathcal C^\top J^\top B^\top)v=F^\top v.
\end{align*}
By \cref{lem:flag}, $\mathcal V_*$ is therefore the largest common
$(E_1^\top,\dots,E_m^\top,F^\top)$-invariant subspace contained in
$\ker H^\top$.

Put $S_k=\operatorname{im}[\,H_0\ H_1\ \cdots\ H_k\,]$.
By \cref{eq:word-blocks},
\begin{align*}
S_{k+1}=S_k+\sum_{j=1}^mE_jS_k+FS_k.
\end{align*}
Thus the sequence is increasing, and an equality $S_{k+1}=S_k$
implies invariance under every $E_j$ and $F$, so all subsequent
spaces coincide. If $H=0$, all these spaces are zero.
Otherwise $\dim S_0\geq1$; if no equality occurs before $S_{n-1}$,
then $\dim S_{n-1}\geq\dim S_0+n-1\geq n$.
In either case, $S:=S_{n-1}$ is invariant under all these matrices.
If $U$ is any other common invariant subspace containing
$\operatorname{im}H$, the recursion gives $S_k\subset U$ by
induction. Hence $S$ is the smallest such subspace.

For $v\in S^\perp$, $x\in S$, and
$M\in\{E_1,\dots,E_m,F\}$, we have
\begin{align*}
\langle M^\top v,x\rangle=\langle v,Mx\rangle=0.
\end{align*}
Consequently, $S^\perp$ is invariant under
$E_1^\top,\dots,E_m^\top,F^\top$. Since $\operatorname{im}H\subset S$,
it is also contained in $\ker H^\top$.
The maximality of $\mathcal V_*$ therefore gives
$S^\perp\subset\mathcal V_*$.
Conversely, the same inner-product identity shows that
$\mathcal V_*^\perp$ is invariant under every $E_j$ and $F$,
and $\mathcal V_*\subset\ker H^\top$ implies
$\operatorname{im}H\subset\mathcal V_*^\perp$.
The minimality of $S$ gives $S\subset\mathcal V_*^\perp$,
and hence $\mathcal V_*\subset S^\perp$.
We conclude that $S_{n-1}^\perp=\mathcal V_*$.
Thus \cref{eq:exact-rank} is equivalent to $\mathcal V_*=\{0\}$,
and the conclusion follows from \cref{thm:geometry}.
\end{proof}

\begin{remark}\label{rem:surjective-diffusion}
Since $I_r-J\mathcal D$ is the orthogonal projector onto
$\ker\mathcal D$,
\begin{align*}
\operatorname{im}H=B\ker\mathcal D.
\end{align*}
Thus $H$ represents the drift-control directions left after the diffusion
has been prescribed, without choosing a basis of $\ker\mathcal D$. For
$m=1$, the matrices $E_1,F,H$ express Peng's reduced word criterion
\cite[Theorem~3.2]{Peng1994} directly in the original
coefficients, with the finite truncation in
\cite[Theorem~6.10]{Lue2021a}. Related finite-word characterizations for
backward-structured systems are given in
\cite[Propositions~3.2--3.3]{Sun}.
\end{remark}

The block dimensions in \cref{eq:pair} grow exponentially.
\Cref{sec:algorithm} gives an equivalent reduced computation.

\section{Computational method and numerical illustration}\label{sec:algorithm}

The rank differences in \cref{eq:delta} can be computed by orthogonal
elimination on the current obstruction space. We first give the exact
algebraic algorithm and prove its equivalence to the subspace iteration.
We then describe a QR/SVD realization with polynomial storage and arithmetic
cost, avoiding the exponentially large matrices in \cref{eq:pair}.

\subsection{Orthogonal elimination}

Suppose $[\,Q_k\ V_k\,]$ is orthogonal, with
$\operatorname{im}V_k=\mathcal V_k$ and
$\operatorname{im}Q_k=\mathcal V_k^\perp$. Write
$q_k=\dim\mathcal V_k^\perp$ and $d_k=n-q_k$, so that
$Q_k\in\mathbb R^{n\times q_k}$ and $V_k\in\mathbb R^{n\times d_k}$.
Initially, $Q_0$ has no columns and $V_0=I_n$. By \cref{lem:flag},
$\mathcal V_{k+1}\subset\mathcal V_k$, so a candidate for the next space
has the form $v=V_kb$, with $b\in\mathbb R^{d_k}$. Write the auxiliary
vectors as $w_j=V_ka_j$, with $a_j\in\mathbb R^{d_k}$, and put
$a=[a_1^\top\ \cdots\ a_m^\top]^\top$. Since a vector belongs to
$\mathcal V_k$ precisely when $Q_k^\top$ annihilates it, the input and
drift constraints become
\begin{align*}
B^\top V_kb+\sum_{j=1}^mD_j^\top V_ka_j=0,\qquad
Q_k^\top A^\top V_kb+\sum_{j=1}^mQ_k^\top C_j^\top V_ka_j=0.
\end{align*}
The reduced block multiplying $b$ is
\begin{align}\label{eq:reduced-blocks}
L_k=
\begin{pmatrix}
	B^\top V_k\\
	Q_k^\top A^\top V_k
\end{pmatrix}
\in\mathbb R^{(r+q_k)\times d_k},
\end{align}
and the block multiplying $a$ is
\begin{align}\label{eq:elimination-block}
M_k=
\begin{pmatrix}
	D_1^\top V_k&\cdots&D_m^\top V_k\\
	Q_k^\top C_1^\top V_k&\cdots&Q_k^\top C_m^\top V_k
\end{pmatrix}
\in\mathbb R^{(r+q_k)\times(md_k)}.
\end{align}
The constraints read
\begin{align}\label{eq:reduced-constraint}
L_kb+M_ka=0.
\end{align}
The coordinates of $a$ already impose $w_j\in\mathcal V_k$ for every
$j$, so these membership conditions require no additional equations.

To eliminate $a$ from \cref{eq:reduced-constraint}, let $U_{0,k}$ have
orthonormal columns spanning $\ker M_k^\top$ and set
\begin{align}\label{eq:residual}
R_k=U_{0,k}^\top L_k.
\end{align}
Because $(\ker M_k^\top)^\perp=\operatorname{im}M_k$, eliminating $a$ from \cref{eq:reduced-constraint} gives
\begin{align*}
V_kb\in\mathcal V_{k+1}
\Longleftrightarrow L_kb\in\operatorname{im}M_k
\Longleftrightarrow U_{0,k}^\top L_kb=0
\Longleftrightarrow b\in\ker R_k.
\end{align*}
Thus $\mathcal V_{k+1}=V_k\ker R_k$. Let
$\Delta q_k=\operatorname{rank}R_k$, and choose an orthogonal matrix
$O_k=[\,O_{1,k}\ O_{0,k}\,]$ whose first $\Delta q_k$ columns span
$(\ker R_k)^\perp$ and whose remaining columns span $\ker R_k$. The
update is
\begin{align}\label{eq:update}
Q_{k+1}=[\,Q_k\ V_kO_{1,k}\,],\qquad
V_{k+1}=V_kO_{0,k}.
\end{align}
The columns of $V_kO_{1,k}$ are the directions removed at this stage;
they are appended to the orthogonal complement already computed.

This construction computes exactly the rank differences of the prescribed
block pair.

\begin{proposition}\label{prop:staircase}
In exact arithmetic, the updates in
\cref{eq:reduced-blocks,eq:elimination-block,eq:residual,eq:update}
preserve orthogonality of $[Q_k\ V_k]$ and satisfy
$\operatorname{im}V_k=\mathcal V_k$, $q_k=\delta_k$, and
$\Delta q_k=\delta_{k+1}-\delta_k$. The iteration reaches a stationary
space or the zero space after at most $n$ stages, and returns the
classification in \cref{tab:classification-regimes} after checking
$\operatorname{rank}\mathcal D$.
\end{proposition}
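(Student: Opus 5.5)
The plan is an induction on $k$, proving simultaneously that $[Q_k\ V_k]$ is orthogonal, that $\operatorname{im}V_k=\mathcal V_k$ and $\operatorname{im}Q_k=\mathcal V_k^\perp$, and that $q_k=\delta_k$.

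\emph{Base case.} At $k=0$ we have $V_0=I_n$, $Q_0$ empty, $\mathcal V_0=\mathbb R^n$ and $q_0=0=\delta_0$.

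\emph{Inductive step.} Assume the claims at step $k$.

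The elimination argument preceding the proposition already gives $\mathcal V_{k+1}=V_k\ker R_k$. It starts from $\mathcal V_{k+1}\subset\mathcal V_k$ (\cref{lem:flag}) and parametrizes $v=V_kb$, $w_j=V_ka_j$. The constraints $w_j\in\mathcal V_k$ are then automatic, and the drift condition reduces to annihilation by $Q_k^\top$. Eliminating $a$ uses $\ker M_k^\top=(\operatorname{im}M_k)^\perp$. I will check this chain once more with care.

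Since $O_k=[O_{1,k}\ O_{0,k}]$ is orthogonal, $V_kO_{0,k}$ has orthonormal columns spanning $V_k\ker R_k=\mathcal V_{k+1}$. The columns of $V_kO_{1,k}$ are orthonormal, orthogonal to those of $V_kO_{0,k}$, and orthogonal to those of $Q_k$, because $Q_k^\top V_k=0$. Hence $[Q_{k+1}\ V_{k+1}]$ is orthogonal with $n$ columns. Its $V$-part spans $\mathcal V_{k+1}$, so its $Q$-part spans $\mathcal V_{k+1}^\perp$.

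The count then follows. We have $q_{k+1}=q_k+\operatorname{rank}R_k$, and $\dim\ker R_k=d_k-\Delta q_k$. By \cref{thm:rank}, \cref{eq:rank-codimension}, this gives $q_{k+1}=n-\dim\mathcal V_{k+1}=\delta_{k+1}$. Subtracting, $\Delta q_k=\delta_{k+1}-\delta_k$.

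\emph{Termination.} By \cref{lem:flag}, the sequence is decreasing and becomes stationary by step $n$. Each stage with $\Delta q_k>0$ strictly lowers $d_k$. A stage with $\Delta q_k=0$ gives $\mathcal V_{k+1}=\mathcal V_k$, and from then on the iteration repeats. So at most $n$ stages occur before $\mathcal V_{k+1}=\mathcal V_k$ or $d_k=0$. At that point $\mathcal V_*=\operatorname{im}V_k$.

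\emph{Classification.} We read off whether $\mathcal V_*=\{0\}$, i.e.\ whether $q_k=n$. Computing $\operatorname{rank}\mathcal D$ and comparing it with $mn$ then gives the regime in \cref{tab:classification-regimes} via \cref{thm:geometry}.

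The only delicate point is the equivalence $V_kb\in\mathcal V_{k+1}\iff L_kb\in\operatorname{im}M_k$. It needs the observation that $w\in\mathcal V_k$ forces $w=V_ka$ for some $a$, which holds because $\operatorname{im}V_k=\mathcal V_k$. It also needs that a vector $x$ lies in $\mathcal V_k$ exactly when $Q_k^\top x=0$, which holds because $\operatorname{im}Q_k=\mathcal V_k^\perp$. Both facts come directly from the inductive hypothesis.
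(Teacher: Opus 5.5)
Your proposal is correct and follows the same route as the paper's proof. Both arguments rest on the identity $\mathcal V_{k+1}=V_k\ker R_k$ from the elimination, orthonormality of the update because $V_k$ is an isometry and $Q_k^\top V_k=0$, the count $q_{k+1}=q_k+\Delta q_k$ combined with \cref{eq:rank-codimension}, and termination via \cref{lem:flag}. Your inductive hypothesis also carries $\operatorname{im}Q_k=\mathcal V_k^\perp$, which is what lets the drift constraint reduce to annihilation by $Q_k^\top$; the paper leaves this implicit in ``the elimination above.''
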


\begin{proof}
The elimination above gives
$\operatorname{im}V_{k+1}=V_k\ker R_k=\mathcal V_{k+1}$. Since $V_k$
is an isometry on $\mathbb R^{d_k}$, the two blocks $V_kO_{1,k}$ and
$V_kO_{0,k}$ are orthonormal and mutually orthogonal. Both are orthogonal
to $Q_k$, so \cref{eq:update} preserves a complete orthogonal state basis
and gives $q_{k+1}=q_k+\Delta q_k$. Starting from
$q_0=\delta_0=0$, \cref{eq:rank-codimension} yields $q_k=\delta_k$ and
the asserted increment identity.

If $\Delta q_k=0$, then $\ker R_k=\mathbb R^{d_k}$ and
$\mathcal V_{k+1}=\mathcal V_k$; by \cref{lem:flag}, all subsequent
spaces coincide. Otherwise the dimension drops by at least one. Hence the
algorithm terminates within $n$ stages. The returned space is
$\mathcal V_*$, and \cref{thm:geometry} gives the classification.
\end{proof}

\Cref{alg:staircase} summarizes the exact-arithmetic computation.

\begin{algorithm}[htbp]
\caption{Orthogonal elimination for the controllability criterion (exact arithmetic)}
\label{alg:staircase}
\begin{algorithmic}[1]
	\REQUIRE $A,C_j\in\mathbb R^{n\times n}$, $B,D_j\in\mathbb R^{n\times r}$,
	$j=1,\dots,m$.
	\STATE $Q\gets[\,]$, $V\gets I_n$; form
	$\mathcal D=[D_1^\top\ \cdots\ D_m^\top]^\top$.
	\FOR{$k=0,\dots,n-1$}
	\STATE Form $L$ and $M$ from $Q,V$ by
	\cref{eq:reduced-blocks,eq:elimination-block}.
	\STATE Reduce $M$ by left orthogonal transformations and apply the same
	transformations to $L$.
	\STATE Let $R$ consist of the rows of the transformed $L$ corresponding
	to the zero rows of the transformed $M$.
	\STATE Find $\Delta q=\operatorname{rank}R$ and an orthogonal matrix
	$O=[O_1\ O_0]$, with $\operatorname{im}O_0=\ker R$.
	\IF{$\Delta q=0$}
	\STATE \textbf{return} nonzero subspace $\operatorname{im}V$; all four
	controllability properties fail.
	\ENDIF
	\STATE $Q\gets[Q\ VO_1]$, $V\gets VO_0$ (using the old $V$ on
	both right sides).
	\IF{$V$ has no columns}
	\STATE \textbf{return} AC, ANC and ENC hold; EC holds if and only if
	$\operatorname{rank}\mathcal D=mn$.
	\ENDIF
	\ENDFOR
\end{algorithmic}
\end{algorithm}

\subsection{Numerical realization and complexity}

The elimination uses Householder QR and SVD \cite{LAPACK}.
At one stage, put $p=r+q$, $d=n-q$, $h=md$ and
$s=\operatorname{rank}M$, so $M\in\mathbb R^{p\times h}$ and
$L\in\mathbb R^{p\times d}$. When $p\geq h$, apply Householder QR
to $[M\ L]$, then an SVD to the leading $h\times h$ triangular
block of the transformed $M$. Apply the same left transformations
to $L$ and retain all $p-s$ rows corresponding to zero rows of $M$.
These form $R$. Reflectors avoid storing a full $p\times p$ factor.
When $p<h$, a full left singular-vector matrix of $M$ supplies
these rows directly; its full right factor is unnecessary.

An SVD of $R$ gives the decomposition in \cref{eq:update}.
Its full $d\times d$ right factor is required when $R$ is wide,
since a thin SVD omits part of the nullspace. If $R$ has no rows,
its kernel is $\mathbb R^d$ and the iteration stops. After updating,
Householder QR may reorthogonalize $[Q\ V]$ in its existing order.

The use of the residual itself is important for numerical rank decisions.
Indeed,
\begin{align*}
R_k^\top R_k
=L_k^\top\bigl(I-\Pi_{\operatorname{im}M_k}\bigr)L_k.
\end{align*}
This Gram matrix has the correct kernel, but its nonzero eigenvalues are
the squares of the nonzero singular values of $R_k$. We therefore compute
the singular values of $R_k$ directly. We also avoid forming the projected
residual by the subtraction $L_k-U_{1,k}(U_{1,k}^\top L_k)$, where
$U_{1,k}$ spans $\operatorname{im}M_k$ orthonormally: cancellation can
obscure a small residual. Applying the orthogonal transformations to $L_k$
and extracting the required rows gives \cref{eq:residual} without this
subtraction.

In floating-point arithmetic, ranks must be decided relative to a stated
scale and tolerance. For each matrix $X$ whose rank is required, one may
count singular values above
$\tau_{\mathrm{abs}}+\tau_{\mathrm{rel}}\lVert X\rVert$, with tolerances
chosen for the data scale and precision. Singular values near this threshold
require a sensitivity check; the orthogonal implementation does not remove
the nonrobustness in \cref{rem:nonrobustness}.

The coefficient matrices, the current orthogonal state basis, and the work
arrays require $O\bigl(mn(n+r)\bigr)$ storage. Indeed, $p\leq r+n$ and
$h\leq mn$; the tall branch does not store a full $p\times p$ factor,
while in the wide branch $p^2<ph$. The factorization of $M$ costs at
most $O\bigl(ph\min(p,h)\bigr)\leq O(ph^2)$ arithmetic operations.
Including matrix products, the decomposition of $R$, and
reorthogonalization, each stage costs at most
$O\bigl(m^2n^2(n+r)\bigr)$. At most $n$ stages and the final
singular-value computation for $\mathcal D$ therefore cost
$O\bigl(m^2n^3(n+r)\bigr)$ in dense real arithmetic. In contrast, at
$n=14$, $r=2$, and $m=1$, the prescribed matrix $\mathsf N_n$ alone
would require about $56$ GiB of dense storage at eight bytes per entry,
by the dimensions in \cref{eq:pair}.

\subsection{Input configurations and control energy}\label{sec:numerical-example}

We compare the energy required to reduce the terminal error under three
input configurations of the same two-dimensional system with two Brownian
motions. Take
$A=\bigl(\begin{smallmatrix}0&1\\0&0\end{smallmatrix}\bigr)$,
$C_1=\bigl(\begin{smallmatrix}1/2&0\\1&1/2\end{smallmatrix}\bigr)$ and
$C_2=\operatorname{diag}(1/3,-1/3)$.
The five candidate input columns are given by
$B=e_2(1,1,0,0,0)$,
$D_1=[\,0\ e_2\ e_1\ e_2\ 0\,]$ and
$D_2=[\,0\ e_1\ 0\ 0\ e_2\,]$, where $e_1,e_2$ are the
coordinate vectors and $0$ denotes a zero column.
Configurations I, II and III use columns $\{2\}$, $\{1,2\}$ and
$\{1,2,3,4,5\}$, respectively, in all three input matrices.

Orthogonal elimination and exact rational rank calculations give
\cref{tab:example-classification}. Adding the drift-only input in II
removes the geometric obstruction, while the additional diffusion inputs
in III make $\mathcal D$ surjective. Thus II permits exact null control,
and III also permits exact control to arbitrary random targets.

\begin{table}[tb]
\centering
\caption{Input classification by \cref{cor:rank-control}.}
\label{tab:example-classification}
\begin{tabular}{@{}cccc@{}}
\toprule
Case & $(\delta_1,\delta_2)$ & $\operatorname{rank}\mathcal D$ & Properties \\
\midrule
I & $(0,0)$ & $1$ & None \\
II & $(1,2)$ & $1$ & AC, ANC, ENC \\
III & $(1,2)$ & $4$ & AC, ANC, ENC, EC \\
\bottomrule
\end{tabular}

\end{table}

For $T=1$ and $y_0=e_1$, we solve
\begin{align*}
\min_{u\in\mathcal U_1}\Bigl\{
\mathbb E\int_0^1|u(t)|^2\,dt+\lambda\mathbb E|y(1)|^2\Bigr\},
\end{align*}
with $\lambda=10^k$, $k=0,\ldots,6$.
The Riccati method in \cref{sec:strict-alternative}, with drift $A$,
zero running state weight and terminal weight $\lambda I_2$, gives the
optimal feedback $u_\lambda=K_\lambda y_\lambda$.
We compute $M_\lambda(t)=\mathbb E[y_\lambda(t)y_\lambda(t)^\top]$ from
\begin{align*}
\begin{cases}
\displaystyle \dot M_\lambda=F_\lambda M_\lambda+M_\lambda F_\lambda^\top
+\sum_{j=1}^2G_{\lambda,j}M_\lambda G_{\lambda,j}^\top,\\[4pt]
M_\lambda(0)=e_1e_1^\top,
\end{cases}
\end{align*}
where $F_\lambda=A+BK_\lambda$ and
$G_{\lambda,j}=C_j+D_jK_\lambda$.
The terminal error is $\varepsilon_\lambda=\operatorname{tr}M_\lambda(1)$
and the control energy is
\begin{align*}
E_\lambda=\int_0^1\operatorname{tr}
\bigl(K_\lambda(t)M_\lambda(t)K_\lambda(t)^\top\bigr)\,dt.
\end{align*}

\begin{figure}[tb]
\centering
\includegraphics[width=0.65\linewidth]{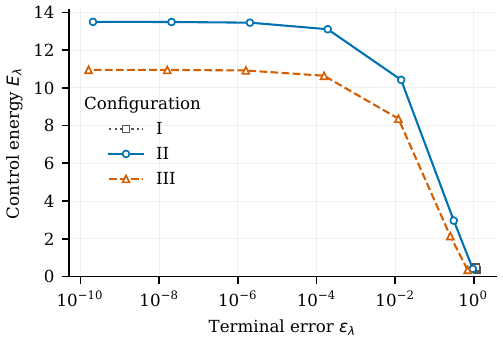}
\caption{Terminal error and expected control energy for
$\lambda=10^k$, $k=0,\ldots,6$, computed from the Riccati and
second-moment equations. Markers show all seven penalties in each
configuration.}
\label{fig:example-feedback}
\end{figure}

\Cref{fig:example-feedback} shows a persistent terminal error in I,
whereas II and III attain decreasing errors with energies that level off.
In II, increasing $\lambda$ from $10^2$ to $10^6$ reduces the error
from $1.4105\times10^{-2}$ to $2.0745\times10^{-10}$, while the energy
increases from $10.426$ to $13.503$.
At $\lambda=10^6$, III gives an error of $1.6017\times10^{-10}$
with energy $10.959$, while I retains an error of about $1.1293$.
The behavior in II and III agrees with the bounded-energy approximation
provided by ENC. Their different EC classifications are determined by
the joint diffusion rank in \cref{tab:example-classification}.

\section{Conclusions}\label{sec:conclusion}
We have obtained a unified finite algebraic classification of four
controllability properties for constant-coefficient linear stochastic
systems with multidimensional Brownian noise and arbitrary joint
diffusion rank. A common geometric obstruction characterizes
approximate controllability, approximate null controllability and
exact null controllability, while exact controllability additionally
requires surjective diffusion control. The feedback construction
provides finite-energy null control at any prescribed positive time,
and orthogonal elimination makes the rank tests computable without
forming exponentially large block matrices. The numerical example
relates this classification to the terminal-error and energy behavior
of penalized LQ controls under successive additions of input channels.
Quantifying the effects of bounded gains
and finite switching resolution is a natural direction for further work.

\section*{Acknowledgements}
Qi L\"u was supported by the National Natural Science Foundation of China
under grant 12625102. Yu Wang was supported by the National Natural Science
Foundation of China under grant 12401589 and the Sichuan Science and
Technology Program under grant 2026NSFSC0777.

\bibliographystyle{siam-paper-no}
\bibliography{references}

\end{document}